\numberwithin{equation}{section}
\newtheorem{theorem}{Theorem}
\newtheorem{meta-thm}[theorem]{Meta-Theorem}
\newtheorem{lemma}[theorem]{Lemma}
\newtheorem{proposition}[theorem]{Proposition}
\newtheorem{definition}[theorem]{Definition}
\theoremstyle{remark}
\newtheorem{remark}[theorem]{Remark}
\def\Id{\operatorname{Id}}
\def\E{{\mathcal E}}
\def\LL{ {\mathcal{L}}}
\def\complex{{\mathbb C}}
\def\integer{{\mathbb Z}}
\def\nat{{\mathbb N}}
\def\real{{\mathbb R}}
\def\torus{{\mathbb T}}
\def\eps{\varepsilon}
\def\teps{ {\tilde \eps}}
\def\th{\theta}
\def\om{\omega}
\def\<{\langle}
\def\>{\rangle}
\def\froeschle{Froeschl\'e \  }
\begin{document}
\title[Simple proof of Gevery Estimates]{A simple proof of Gevrey estimates for expansions of quasi-periodic orbits: dissipative models and lower dimensional tori}

\author[A.P. Bustamante]{Adri\'an P. Bustamante}  
\address{
Department of Mathematics, University of Roma Tor Vergata, Via
della Ricerca Scientifica 1, 00133 Roma (Italy)}
\email{bustamante@mat.uniroma2.it}
\thanks{A.B. acknowledges the MIUR Excellence Department Project
awarded to the Department of Mathematics, University of Rome Tor
Vergata, CUP E83C18000100006.  A.B. was partially supported by the MIUR-PRIN 20178CJA2B “New Frontiers of Celestial Mechanics: theory and Applications”} 

\author[R. de la Llave]{Rafael de la Llave} 
\address{
School of Mathematics,
Georgia Institute of Technology,
686 Cherry St. Atlanta GA. 30332-1160, USA }
\email{rafael.delallave@math.gatech.edu}

\subjclass[2020]
          {
35C20 
34K26 
37J40 
70K43 
70K70 
          }
          \keywords{Lidstedt series, Gevrey series, asymptotic expansions, resonances, whiskered tori}


\begin{abstract}
  We consider standard-like/\froeschle dissipative maps
  with a dissipation and  nonlinear perturbation.
  That is
  \[
    T_\eps(p,q) = \left(
      (1 - \gamma \eps^3) p + \mu  +  \eps V'(q),
      q +  (1 - \gamma \eps^3) p + \mu  +  \eps V'(q)    \mod 2 \pi \right)
      \]
      where $p \in \real^D$, $q \in 2 \pi \torus^D$ are the dynamical
      variables.  The $\mu \in \real^D, \gamma\in \real$ are parameters
      of the model. We will consider the dependence on $\eps \in \real$,
      which is the most important perturbation parameter. We assume that the potential $V$ is a trigonometric polynomial
      (we conjecture, however, that this is not needed for the results).

      Note that when $\gamma \ne 0$, the perturbation parameter $\eps$ creates
      dissipation, which has a drastic effect on the existence of
      quasi-periodic orbits, hence it is a singular perturbation.

      We fix a frequency $\omega \in \real^D$ and study the existence of
      quasiperiodic orbits. When there is dissipation, having
      a quasiperiodic orbit  of frequency $\omega$ requires
      adjusting the parameter $\mu$, called \textit{the drift}. 

      We first study the Lindstedt series (formal power series in $\eps$)  for quasiperidic orbits with
      $D$ independent frequencies and the drift when $\gamma \ne 0$.
      We show that, when $\omega$ is
      irrational, the series exist to all orders, and when  $\omega$ is Diophantine, 
we show that the formal Lindstedt  series are Gevrey
(we do not consider the case $\gamma = 0$, since
\cite{Moser67} showed that, for Diophantine $\omega$,
the series have a positive radius of convergence).

  The Gevrey nature of the Lindstedt series above is a  particular case of the results of a generally
  applicable method in \cite{BustamanteL22}, but the present proof is
  rather elementary. 

  We  also study the case when $D = 2$, but the quasi-periodic orbits
  have only a one independent frequency (Lower dimensional tori).
  Both when $\gamma = 0$ and when $\gamma \ne 0$, we show
  that, under some mild non-degeneracy conditions on $V$, there
  are (at least two) formal Lindstedt  series defined to all orders
  and that they are Gevrey. Furthermore, we can take $\mu$ along a
  $1$ dimensional space.

 \end{abstract} 
\maketitle


\section{Introduction}\label{sec:intro}
The goal of this paper is to show that several perturbative
formal power series expansions (See Definition~\ref{def:formalsolutions})
for
quasi-periodic solutions (tori)  are Gevrey, that is, there
are specific upper bounds on the growth of the coefficients of the power series, see Definition~\ref{gevrey}.

The above formal series
for maximal dimensional tori in symplectic mappings,
were shown to have a  positive radius of
convergence in \cite{Moser67}. Hence, in this paper we
will consider the series for  maximal dimensional tori
only when the perturbations include weak  dissipation.

Adding dissipation is a singular perturbation since the tori may be
isolated and we need to include the drift parameter to ensure
their existence. When there is dissipation, the
drift that ensures the existence
of a torus with the frequency $\omega$ needs to be determined for
each value of the perturbation, so that the drift becomes an unknown. 

We will also consider series for
lower dimensional tori, both in the conservative case
(studied in \cite{Jo-Lla-Zou-99} ) and
in the disipative case. In both cases,
it is expected that the series have zero radius of convergence, see \cite{Jo-Lla-Zou-99, GallavottiG02}.
We show that, when the frequency is Diophantine,  the perturbative formal power series are Gevrey. 

The paper \cite{BustamanteL22} developed a general method, applicable 
to many problems, 
to prove that formal power series in the pertubation parameter
of quasi-periodic solutions
of Hamiltonian systems perturbed by dissipation
are Gevrey   ( See Definition~\ref{gevrey}). The method of \cite{BustamanteL22} is based on
implementing a fast converging method (KAM type)
for expansions  that converges superexponentially
but in exponentially fast decreasing  balls.  Even if the only place where
all the iterative steps are defined is the origin, by using Cauchy estimates
after  a finite number of steps, one obtains Gevrey estimates on
the coefficients. This method requires that the frequency satisfies
Diophantine estimates.

The example used in \cite{BustamanteL22} to illustrate the results is  perturbative
expansions of quasiperiodic orbits with Diophantive frequency 
of slighty dissipative  twist maps (see Section~\ref{sec:model}). 
The unperturbed case is the integrable twist map, but the perturbation
in Section ~\ref{sec:model} involves
not only a nonlinearity but also dissipation. Including 
dissipation requires to introduce an extra parameter -- often
called \emph{the drift}, that has the interpretation of an external forcing.
The value of the drift that allows to have the quasi-periodic solution of the selected frequency is
part of the unknowns that need to be determined.

The first  goal of this paper is to give an elementary proof of
a particular case of the results in \cite{BustamanteL22}.
We consider a simple model, the dissipative standard map
with any number of degrees of freedom, see Section~\ref{sec:model},
which has been studied in the literature (e.g. \cite{Chirikov,Froeschle71}) and indeed,
was the model used as an example  in \cite{CallejaCL22}
and studied numerically in \cite{BustamanteC19, BustamanteC21}. 
We  show that the series are Gevrey.

The second goal of this paper is to extend the methods to
lower dimensional tori (1-dimensional tori in maps
with 2 degrees of freedom). We consider some  twist maps with two
degrees of freedom (also called \froeschle maps) and
seek solutions with only one independent frequency. 

The 1-D tori in 2 DOF systems involve
extra considerations related to the different topological
ways to embed a 1-D torus in the phase space $\torus^2 \times \real^2$
and we recall them.

In the conservative case the formal power series  were studied in
\cite{Jo-Lla-Zou-99}, where it was showed  that, under mild non-degeneracy
conditions in $V$, the series exist to all orders
\footnote{
The goal of \cite{Jo-Lla-Zou-99} was to study the domain of
definition in the complex $\eps$
plane of the lower dimensional tori as well as properties of the linearization
around them. Similar results on the domain were proved in
\cite{GallavottiG02} by resummation of series.

One can find formal expansions of lower dimensional tori in
Hamiltonian flows 
in the classical literature \cite[Chapter V]{Poincare}, \cite{PoincareS}, but they are not
easy to read by modern readers. See also \cite{CorsiG08} }.

In this paper we show that the series constructed in
\cite{Jo-Lla-Zou-99} are Gevrey when the frequency is Diophantine. 

We also study the lower dimensional tori
in the case that the perturbation includes disipation
and show that the perturbative expansions for the parameterization
of the torus and the drift are Gevrey when the frequency is Diophantine.

The problem of lower dimensional tori is interesting for the theory of
perturbative expansions 
because the small divisors involved in these  formal power series are less
than those needed for a KAM treatment. 
Whiskered KAM tori are also useful in Arnold diffusion and in mission design
and the theoretical insight leads to practical algorithms, which have been implemented.

We present results for maps. The method can be adapted for flows using
the algorithms for differential equations. The method of
taking a time-one map does not seem to work since the time one map of hamiltonians with a trigonometric  perturbation are not trigonometric perturbations.

\section{Description of the model considered and of the 
  perturbative expansions of quasi-periodic orbits}
\label{sec:preliminaries}
In this section,  we describe the models that we 
study and collect standard definitions such as the notion of quasiperiodic orbits and the expansions on quasi-periodic
motions.   All the material in this section is quite standard, but we will need it to set the notation used
in the statement of the results (Section~\ref{sec:statement}) and
the proofs. 

\subsection{Description of the models considered} 
\label{sec:model}

We will consider maps
that transform a point  $(p_n, q_n) \in \real^D \times \torus^D$ 
into another point $(p_{n+1}, q_{n+1}) \in \real^D \times \torus^D$
according to the formula: 

\begin{equation}\label{DSM}
  \begin{split}
 &   p_{n+1} = (1 - \gamma \eps^3) p_n + \mu  +  \eps  V'(q_n ) \\
 &  q_{n+1}  = q_n +   p_{n+1}  \mod 2 \pi
\end{split}
\end{equation}
where $V$  is a real valued periodic function of its argument.  $V'$ denotes the gradient 
with respect to the arguments.   The notation $V'$ is
for the gradient, hence, it is a $D$ dimensional vector. Maps of this form have been used as qualitative models of dynamics near resonances, \cite{Chirikov}, \cite{Froeschle71}.
In the case that $D=1$, the maps are called \emph{standard-like} maps
(or Chirikov maps) when $V(x)= \sin(x)$. When $D= 2$, they are called
\froeschle maps.

The dimension $D$ does not affect the algebraic manipulations described in
this section, but it affects some of the geometric considerations
for lower dimensional tori discussed in Section ~\ref{sec:recursive-lower}. 

The formula \eqref{DSM}  describing our maps involves several parameters. The most important 
one in this study is $\eps$.
Note that for $\eps = 0$, $\mu =0$ -- the unperturbed integrable case -- the map \eqref{DSM} has a very simple structure,
the orbits are periodic or quasi-periodic depending on $p$.
Since $p$ is conserved, the tori of fixed $p$ are invariant, the
motion on them is a rotation. The $D$ dimensional torus is
foliated by invariant tori of dimension $L$, where $0 <  L \le D$
is the number of rationaly independent components of $p$.
When $L = D$ we talk about fully dimensional tori. 

The effect of $\eps$ is to introduce a non-linearity (in the terms
involving $V'$). When $\gamma = 0$  the maps remain symplectic,
but when $\gamma \ne 0$ the perturbation includes a
(weak) dissipation. Adding a dissipation affects very drastically the existence
of quasi-periodic orbits, so it is a sigular perturbation.
When $\gamma \ne 0$ we need to introduce an extra parameter
$\mu$ to ensure the existence of orbits with the prescribed frequency
$\omega$. The parameter $\mu$ is  an unknown  to be determined.

In this paper we formulate an invariance equation
\eqref{periodicity-hull} for a quasiperiodic solution
( both for  maximal dimensional and for lower dimensional tori).
In the case of Hamiltonian vector fields it is well known
that the formal power series exist to all orders.
For the sake of completeness, we also present a proof
for maps  that involve dissipation and show  that 
equation~\eqref{periodicity-hull} can be solved in  the sense of formal
power series (See Definition~\ref{def:formalsolutions}).

The main result of this paper is to show that these formal power
series satisfy Gevrey estimates
( See Definition~\ref{gevrey} )  
 when the frequency is Diophantine.

\begin{remark}
  The formal power series  solutions have the property that 
  that sums up to order $N$
  produce  residuals  that are smaller (in some appropriate norm)
  than $C_N |\eps|^{N+1}$. Having an approximate  solution of \eqref{periodicity-hull}
  with small residual is the main hypothesis of \emph{a-posteriori}
  theorems that establish that, given the approximate solution (and some
  non-degeneracy conditions) there is a true solution nearby. 
  See   \cite{CallejaCL13} for maximal dimensional tori
  and \cite{CallejaCL20} for lower dimensional tori. 

  Using the approximate solution produced here by truncating the
  formal power series, followed by an a-posteriori theorem, 
  gives lower  estimates of the domain of $\eps$ in the complex plane
  for which there are quasi-periodic solutions.  See \cite{CallejaCL17}
  for the case of maximal dimensional tori and \cite{Jo-Lla-Zou-99} for
  the case of lower dimensional tori. Related  results about the domain of $\eps$ were
  proved using resummation methods in the series  in \cite{GallavottiG02}.

The results proved  here on the Gevrey nature of the series give quantitative
information on the error of the invariance equation for the sum of the
first $N$ terms of the formal power series and allow to make more
quantitative the results based on a-posteriori methods, \cite{Ca-Ce-Lla-13, CallejaCL20}
\end{remark}

\begin{remark}
  The a-posteriori results mentioned before are based on
  showing the convergence of an iterative procedure.  This iterative
  procedure can be implemented in the computer as an
  extremely efficient algorithm (low storage requirements, low
  operation count, quadratically convergent) which is also very
  well adapted to being programed in modern languages.

  One can take truncations of the series here as a starting
  point of the iterative methods \cite{CallejaCL13, CallejaCL20}.  Some
  implementations for the computations of whiskered tori in
  problems in celestial mechanics are in \cite{KumarAL22, FernandezHM22}.
  \end{remark}

In this paper we will assume that $V$, and hence $V'$,  is a trigonometric
polynomial, we use the notation
\begin{equation} \label{polynomial}
  V' (\th) =  \sum_{|\ell|\leq J, \;\ell\in \integer^D} \alpha_\ell e^{ i \ell\cdot  \th}. 
\end{equation}
Note that, with the notation in \eqref{polynomial}, $\alpha_\ell\in \real^D$
and is proportional to $\ell$. We also have $\alpha_0 = 0$ and for real valued polynomials one has $\alpha_\ell = \alpha_{-\ell}^*$.

\begin{remark}
We conjecture that the assumption that $V$ is a trigonometric
  polynomial can be weakened to $V$ analytic.
  It seems plausible that this can be obtained by extending the
  methods of \cite{BustamanteL22}.
\end{remark}

\subsection{A second order equation formulation}
It is convenient to transform the equation \eqref{DSM} 
for orbits into a second order equation for the orbits in half of the variables. 

If we use the equation in \eqref{DSM}, we
obtain $p_{n+1} = q_{n+1} - q_n$ and if we substitute this in
the first equation in \eqref{DSM}, we obtain
\begin{equation}\label{lagrangian}
  q_{n+1} + (1 - \gamma \eps^3) q_{n-1} - (2 - \gamma \eps^3) q_n - \eps V'(q_n) - \mu = 0
\end{equation}

The second order $D$ dimensional equation \eqref{lagrangian} is equivalent to the 
first order   system
\eqref{DSM} in $2D$ variables. The second
order formulation \eqref{lagrangian}  will be more convenient for our purposes.

\begin{remark}
  The case $\gamma = 0$ is the case of conservative perturbations.
  In this case we can take $\mu = 0$.

  The study of Lindstedt series of maximal dimensional
  tori  
when $\gamma  = 0$ is classic.
  The paper \cite{Moser67} showed that, when the frequency is Diophantine,
  the Lindstedt series have a positive radius of convergence.

  In the case when $\gamma > 0$, as shown in \cite{CallejaCL17}, the
  radius of convergence is zero, no matter how small is $\gamma$.
  This is an indication of the fact that adding dissipation to a conservative
  system is a very singular perturbation for the long term behavior.

  In this paper we will concentrate on the case that $\gamma > 0$
  for maximal dimensional tori.
\end{remark}

\begin{remark}
  A simple calculation done in \cite{Jo-Lla-Zou-99} shows
  that, under some mild non-degeneracy assumption,
  the Lindstedt series predicts that the lower dimensional tori
  have hyperbolic normal directions for $\eps > 0$
  and elliptic for $\eps < 0$ (or viceversa).

  In the normally elliptic case, resonances between the normal
  direction and the internal direction can lead to lack of
  analyticity of the torus.  This shows that, for general potentials
  one may get that the radius of convergence of the Lindstedt series
  for a ``generic'' $V$ is
  zero for any $\gamma$. 

  Even if the argument, as written does not rigorously apply
  to forcings which are trigonometric polynomial, it suggests that
  even for trigonometric polynomial forcings, we should expect that
  the radius of convergence is zero.
  \end{remark}

  \subsection{Quasiperiodic orbits: hull functions and their
    periodicity} 
\label{sec:quasiperiodic}

We recall that given $\omega\in\real^L$
we say that a sequence
$q_n$ is quasiperiodic of frequency $\omega$ if
we can find a function $h$ such that
\begin{equation} \label{hulldefined} 
  q_n = h(\omega n )
\end{equation}
with $h$ -- often called
the hull function where $h$ is a function defined on $2 \pi \torus^L$
and taking values in $2 \pi \torus^D$.  $L$ is the number of independent frequencies in $\omega$ --
and $\torus^L$ is the space of internal phases.
We can always assume that
$\omega \cdot l \ne 0 \;\forall l \in \integer^L  \setminus \{0\} $.
Otherwise, we could find an equivalent description of $q$ with a
hull function in a torus of less dimension.

Note that we can think of $h$ as an embedding of the torus  $\torus^L$ into
the phase space. Hence, it is common to refer to quasi-periodic
orbits as invariant tori. 

\begin{remark}
In the literature there are some papers which reserve
the name quasi-periodic for the case that the functions $h$ are differentiable,
and use other names such as almost automorphic for lower regularity. Other
papers use quasi-periodic irrespective of the regularity of $h$. In this paper, since $h$ is analytic these distinction of names do  not matter.
\end{remark}

The function $h$ will take values on the configuration space,
which is $2 \pi \torus^D$.
It is convenient to think of the hull function $h$ as
a real valued function satisfying some periodicity conditions. What is needed is that, when the argument of $h$ changes by
a vector in $ 2 \pi \integer^L$, the output changes by
a vector in $2 \pi \integer^D$. One can easily see that the dependence
of the output should be linear because applying
several translations, the output should add.
This means that the fact that the functions are
from the torus to itself is equivalent to  that
for every $k \in \integer^L$ one should have $h( \th + 2\pi k ) = h(\th) + A k$
with $A$ a $D \times L$ matrix with integer coefficients.
As we will see, in the case that $L =1$, the matrix
$A$ can be identified with a $D$-dimensional integer vector.

\subsubsection{Periodicty in maximal  dimensional tori} 

In the case of maximal dimensional tori ($D = L$) it is natural to
consider $A = \Id$, since this is what happens in the integrable case
and, being a topological property, it has to be preseved under small
perturbations. Hence, we will have
\begin{equation}\label{periodicity2}
  h(\th + \ell ) = h(\th) + \ell  \quad \forall \ell  \in 2 \pi \integer^L.
\end{equation} 
It will be convenient to write \begin{equation}\label{eq:periodic-D2} h(\th) = \th + u(\th)\end{equation} with $u$
a periodic function of period $2\pi$.  We call $u$ the
\emph{periodic part of the hull function}.

\subsubsection{Periodicty in lower dimensional tori} 
In the case of lower dimensional tori ($D > L$)  the situation is richer.
In this paper, we will only consider the case $D=2, L=1$ which
allows to understand the geometry and allows to prove the full results.
In this case, we see that $Ak$ is a $2$-dimensional integer vector, that is: 
\begin{equation}\label{periodicity1}
  h(\th + 2\pi) = h(\th) +  k,
\end{equation}
where $k=(k_1, k_2)\in 2\pi\integer^2$. 

The geometric meaning of $h$ is that it gives a parameterization of
a circle embedding on $\torus^2$ which winds $k_1$ times in the first component
and $k_2$ times in the second direction.  This is a topological property of the
embedding. 
Again, we will write
\begin{equation}\label{periodic-D1}
  h(\th) = \th k  + g(\th)
  \end{equation} 
and call $g$  the periodic part of the hull function. 

We will use
the convention of denoting by $u$ the periodic part of maximal
dimensional tori and by $g$ the periodic part of the lower-dimensional
tori (recall that we will be considering only $1$-dimensional tori in
two degrees of freedom systems).

In the integrable case, the torus in phase space given by $p = \alpha
k $, with $\alpha \in \real$, is invariant but it is also foliated by
$1$-dimensional tori invariant under the map.  It is well known (and
we will verify this perturbatively) that, when $V$ satisfies a
non-degeneracy condition and the frequency is Diophantine, the
perturbations destroy the invariant torus and that only a finite
number of these tori ``persist''.  More precisely, we will show that,
under some conditions verified by \eqref{DSM}, there are formal power
series for a finite number of these tori (both in the symplectic,
$\gamma = 0$, and in the dissipative case, $\gamma \ne 0 $).

\subsection{The invariance equations}

Finding a quasi-periodic sequence $\{ q_n\}$ that is a solution of
\eqref{lagrangian} is equivalent to finding a hull function
which satisfies
\begin{equation} \label{periodicity-hull}
  h(n \omega  + \omega) +  (1 - \gamma \eps^3)  h(n \omega  - \omega)
  - (2 - \gamma \eps^3) h(n\omega)  - \eps V'(h(n\omega))  - \mu = 0
  \quad \forall n \in \integer
\end{equation}

Since $n\omega$ is dense on the torus (we are assuming that $\omega$  is 
Diophantine), and we are assuming that $h$ is continuous,
we see that \eqref{periodicity-hull} is equivalent to
\begin{equation} \label{periodicity-hull-bis}
  h(\th  + \omega) +  (1 - \gamma \eps^3)  h(\th  - \omega)
  - (2 - \gamma \eps^3) h(\th)  - \eps V'(h(\th)) - \mu = 0
  \quad \forall \th \in \torus^L 
\end{equation}

\subsubsection{Underdeterminacy of the invariance equations}
\label{sec:underdeterminacy} 
The  equation \eqref{periodicity-hull-bis}  is underdetermined. If $h$ is solution of \eqref{periodicity-hull-bis}, for any
$\sigma \in \torus^L$  the hull function $h_\sigma$
defined by
\begin{equation}\label{underdeterminacy}
  h_\sigma(\th) = h(\th + \sigma)
\end{equation}
is also a solution.
The underdeterminacy \eqref{underdeterminacy} has
the geometric interpretation of choosing the origin of
the system of coordinates in the reference manifold $\torus^L$.
Clearly, it does not change the range of $h$ nor
the dyamics on it.

For the purposes of this paper, it will be important to choose
a  normalization that fixes the  underdeterminacy. If a normalization
is not fixed, it is impossible to obtain estimates of the coefficients of the expansion.  This normalization will be different in the case of
maximal tori and in the case of lower dimensional tori. 

\begin{remark} 
In other papers \cite{Llave01, LlaveGJV05,CallejaCL13} by taking derivatives along the underdeterminacy in
an approximate  solution one obtains
identities that are useful to simplify the Newton method
and lead to good estimates and efficient numerical methods.
This is reminiscent of the use of Ward identities in gauge theory. Note
that \eqref{underdeterminacy} can be considered as a gauge symmetry.
\end{remark}

\subsubsection{Invariance equations and normalization
  conditions for the periodic parts of the
  hull function}

It will be useful to rewrite \eqref{periodicity-hull-bis}
(and the normalizations) in terms of
the periodic part of the hull function, because in the
calculations of the expansion of the perturbative terms we will be able to use Fourier
analysis.

Since the relation between the hull function and its periodic 
part are different in the case of maximal dimensional tori and in the case
of lower dimensional tori, we will need to deal with these cases separately.

We  also need to supplement the invariance  equations with
normalizations that fix the underdeterminacy \eqref{underdeterminacy} described in
Section~\ref{sec:underdeterminacy}.  These normalizations
are  different
in the case of maximal dimensional tori and lower dimensional tori
and we will discuss them in the next sections. 

\medskip

\leftline{\bf  Maximal dimensional tori} 

Expressing the hull function in terms of its periodic part  \eqref{eq:periodic-D2}
and substituting in \eqref{periodicity-hull-bis} 
we obtain  that for the maximal dimensional tori,
the  invariance equation expressed in terms of the periodic part, $u:\real^D \longrightarrow \real^D$, is:
\begin{equation}
\label{invariance2}
  u(\th + \omega) + u(\th  - \omega) - 2 u(\th) =
  \eps V'(\th + u(\th )) + \mu - \gamma \eps^3( u(\th) - u(\th  -\omega) + \omega) \quad \forall \th \in \torus^D. 
\end{equation}

We emphasize that, since we are assuming $\gamma \ne 0$,
\eqref{invariance2} is an equation for $u$ and $\mu$, which are the
unknowns. The frequency $\omega$ is a data of the problem. We recall
that in this case $L=D$.

The  underdeterminacy \eqref{underdeterminacy}
for the hull function can be translated for the periodic part as follows:  If $u, \mu$ is a solution of \eqref{invariance2}
so is $u_\sigma, \mu$
where
\begin{equation}\label{usigma} u
  _\sigma(\th) = u(\th + \sigma) + \sigma.
\end{equation} 
To settle the underdeterminacy, we have found convenient to use
the normalization: 
\begin{equation} \label{normalizationmax}
  \int_{\torus^D} u(\th) \, d\th = 0.
\end{equation}
It is easy to check that given any periodic function $u$, there is a unique $u_\sigma$ as in \eqref{usigma}
satisfying the normalization \eqref{normalizationmax}. In \cite{CallejaCL13} it is shown that  the solutions of
the invariance equation \eqref{invariance2}  with the normalization
\eqref{normalizationmax} are locally unique.
In this paper, we will show that the power series expansions
solving \eqref{invariance2}, \eqref{normalizationmax}
are also unique.

\medskip 
\leftline{\bf Lower dimensional tori}

For simplicity, we will only consider
the case $D=2, L= 1$. To consider different cases for $D$ and $L$ would require different geometric considerations.

If we substitute \eqref{periodic-D1} into
\eqref{periodicity-hull-bis} we obtain the following equation for the periodic part, $g:\real \longrightarrow \real^2$, 
\begin{equation}\label{invariance-lower}
    g(\th+\omega) - 2g(\th) + g(\th - \omega)
  = \eps V' (\th k + g(\th) ) + \mu - \gamma\eps^3\left( k\omega +g(\th) - g(\th -\omega)   \right)
\end{equation}

We also note that the underdeterminacy described
Section~\ref{sec:underdeterminacy} translates into
the fact that if $g, \mu$ are solutions of
\eqref{invariance-lower},
then, for any $\sigma \in \real$,
$g_\sigma, \mu$ are also solutions, where
\begin{equation} \label{gsigma}
  g_\sigma(\th) =  \sigma k +  g(\th + \sigma). 
\end{equation} 

The normalization we have found convenient to take
is
\begin{equation}
  \label{normalizationlow}
  \int_0^{2\pi}g(\th) \cdot k  \, d \th = 0
\end{equation}
In this paper, we will show that after we make some choices for the
first order of the expansion, all the other terms of
the formal power series solution of \eqref{invariance-lower} satisfying
\eqref{normalizationlow} are unique. We will also show that  there
are always several solutions for the equations at low order.
The existence of several solutions corresponds to the well known
fact that a resonant torus breaks up into a finite number of
low dimensional tori.  See \cite{Treshev} for the Hamiltonian
case. We will verify this in the sense of formal power series. 

\subsection{Some Standard Definitions and some notations} 
\label{sec:definitions} 
In this section we recall some standard definitions 
and introduce some small typographical notations. 

\begin{definition}\label{diophantine}
  We say that $\omega  \in \real^L$ is Diophantine, of type $(\nu, \tau)$, 
  if  there exist $\nu, \tau > 0 $ such that
\begin{equation} \label{Diophantine-standard} 
    | m \cdot \omega -  n |^{-1}  \le \nu |m|^{\tau}\quad n\in \integer, m \in
    \integer^L \setminus \{0\}
\end{equation} 
where $|m| =\sum_{j=1}^L |m_j|. $
\end{definition}

\subsection{Some typographical notation} 
\label{sec:notation}
We will introduce some notation that will allow
to carry out some calculations more concisely: 
\begin{equation}\label{notation}
  \begin{split} 
   \LL_\omega[u](\th) &\equiv u(\th + \om) + u(\th -\om) - 2u(\th) \\
   \E^\om_\eps[u, \mu ](\th) &\equiv  \LL_\omega[u](\th) -   \eps V'(\th + u(\th )) - \mu + \gamma \eps^3( u(\th) - u(\th  -\omega) + \omega) \\
   \E^{\om, k}_\eps[g, \mu ](\th) &\equiv  \LL_\omega[g](\th) -   \eps V'(\th k + g(\th )) - \mu + \gamma \eps^3( g(\th) - g(\th  -\omega) + \omega k)
\end{split}
\end{equation}

So that \eqref{invariance2} and  \eqref{invariance-lower}  can be written more concisely as
\[
\E^\omega_\eps [u, \mu] = 0 \quad\mbox{and}\quad  \E^{\omega, k}_\eps [g, \mu] = 0
\]
respectively. Note that, even if the formulas defining $\E_\eps^\omega$ and $\E_\eps^{\omega,k}$ are very similar, the functions in which the operators act are very different
(e.g. they have domains with  different dimensions and they have different
normalization conditions).

\begin{definition}\label{def:formalsolutions}
  We fix a  norm $\| \cdot \|$ in the space of periodic
  functions. We say that the formal series
  $u  = \sum_n \eps^n u_n$, $\mu = \sum_n \eps^n \mu_n$
  is a  solution of \eqref{invariance2} in the sense of
  formal power series if for every $N \in \nat$,
  there exists $C_N \in \real^+$ such that  
  \[
    \left\| \E^\omega_\eps\left[ \sum_{n=1}^N \eps^n u_n, \sum_{n=1}^N \eps^n\mu_n\right] \right\| 
\le C_N |\eps|^{N+1}.
  \]
  Similarly, we say that the formal series $g= \sum_n \eps^n g_n$, $\mu = \sum_n \eps^n\mu_n$ is a solution of \eqref{invariance-lower} in the sense of formal power series if for every $N\in \nat$, there exists $\tilde C_N\in\real^+$ such that $$ \left\| \E^{\omega,k}_\eps\left[ \sum_{n=1}^N \eps^n g_n, \sum_{n=1}^N \eps^n\mu_n\right] \right\| 
\le \tilde C_N |\eps|^{N+1}.$$
\end{definition}

Of course, when we seek solutions in formal power series we
also impose that the solutions satisfy the normalization conditions.
To get series that satisfy \eqref{normalizationmax} or \eqref{normalizationlow}
it is equivalent that the coefficients satisfy
\[
\int_{\torus^D} u_n (\th)\, d\th = 0, \qquad \int_0^{2\pi} k \cdot g_n(\th) \, d\th = 0,
\]
respectively, where $k\in 2\pi \integer^2$ is fixed by the chosen hull function as in \eqref{periodic-D1}. 

\medskip

The definition of formal solutions
depends on  the norm considered. Some formal power series may be a series in one norm but not in another.  Note however  that a formal power series in a norm is also a formal power series in all
the weaker norms.

Note that we are not even assuming that $\| u_n \|$ is finite for
every $n$. In principle, we could have that the $u_n$ lie in
a wider space but that they cancel in the evaluation of
the $n$ order term (we, of course, need that the operator $\E^\omega_\eps$
can be defined). 

Formal power series are not meant to converge.
Hence, rearrangements, etc, can alter  their prroperties.
Nevertheless, formal power series can be added, multiplied (using the
Cauchy formula for convergent power series), 
derived with respect to $\eps$, etc. in such a way that many of
the standard rules of these operations apply to the algebra of formal power series,
\cite{Cartan63}.

Note that in this paper we are not dealing with formal power series
in general, but rather with formal power series that solve a
functional equation.
Formal power series  that satisfy a well behaved equation (specially if there is an
a-psteriori theorem for this equation) enjoy many more properties than
general formal power series.

\begin{definition}\label{gevrey}
 We fix a  norm $\| \cdot \|$ in the space of periodic
 functions. We say that a formal power series, $\sum_{n=0}^\infty \eps^n u_n$, is Gevrey  if for any $n\in \nat$ there
 exist $A, R, \sigma > 0$ such that: 
 \begin{equation} \label{gevreybounds}
   \| u_n \| \le A R^n (n!)^\sigma
 \end{equation} 
\end{definition}

\begin{remark}
  The Definition~\ref{gevrey} depends on the norm considered. If a series is Gevrey for a norm is also Gevrey for all the weaker norms.
\end{remark}

\begin{remark}
  The factor $R^n$ in Definition \ref{gevrey}  is subdominant with
  respect to the factorial. It can be eliminated just
  by rescaling the parameter $\eps$ in the power series.

  We have included the factor
  $R^n$ in the definition to be compatible with the literature, but in this
  paper, one can take $R=1$ and then, reintroduce it by scaling the parameter.

  \end{remark}
\begin{remark} 
As it is well known, formal power series do
not need to converge for any value of $\eps$.
Nevertheless, for small values of $\eps$, the  sum
may be a very good approximation for the solution for 
small values of the perturbation parameter.  For a small value of 
$\eps$ adding a few more terms increases the accuracy, but 
if we add more and more, we start seeing the divergence of the series. A practical problem is to know what is the optimal number of terms to sum, given a certain $\eps$. Some estimates on
the optimal term  to stop can be obtained from estimates
on the size of the coefficients $C_N$ or on the growth of
the coefficients (notably Gevrey).

\end{remark}

Note that the definition of a solution in the sense of formal power
series and the Definition \ref{gevrey}  depend on the choice of
a norm in the space of periodic functions. The norms
that we will use in this paper are the following.

\begin{definition}\label{norms}
  Given $\rho \ge  0, r\in \integer_+$ and a periodic function $u:\real^L \longrightarrow \complex^D$  with Fourier expansion $u(\th) = \sum_{\ell\in\integer^L} \hat{u}_\ell e^{i\th\cdot \ell}$, we define

  \begin{equation}\label{eq:norms}
    \| u \|_{\rho, r}^2 = |\hat u_0|^2 + \sum_{\ell\neq 0}      | \hat u_\ell|^2 e^{2|\ell|\rho}(1 + |\ell|^{2})^r 
  \end{equation}
\end{definition}

Note that, in the case of maximal dimensional tori, since we will be looking for functions satisfying the normalization \eqref{normalizationmax}, 
we can ignore the zero Fourier coefficient for these functions.

The usefulness of the norms \eqref{eq:norms} is that the size of
the norms can be read off easily from the size of the Fourier
coefficient and, at the same time, they satisfy the Banach
algebra property under multiplication when $r>d/2$ (See \cite{XuLW22}), that is
\[
  \| u v \|_{\rho, r} \le C \| u \|_{\rho, r} \| v \|_{\rho,r}.
\]
When $\rho > 0$, the functions with a finite norm are analytic
functions in a strip around the real torus.  When $\rho = 0$,
they become  the standard Sobolev norms.  These norms are
particular cases of Bergman spaces in domains.  Even if
we will not use it here, we note that these norms come from
an inner product, so the spaces are Hilbert spaces and
one can use orthogonal projections, etc.

\begin{remark}\label{projection} 
Given a function $V$, subtracting from
it is average reduces the norm, that is
\[
  \| V - \mu\|_{\rho, r} \le \| V \|_{\rho,r}
  \quad \mbox{with}\quad \mu = \int V(\th) \, d\th.
\]
This will be used in our calculations when we take $\mu$ to
be the correction of the drift. Due to the fact that  the spaces we are considering are Hilbert spaces,  the removal of 
the average     is an orthogonal projection on the space of 
functions with zero average and hence, does not increase the norm. 
\end{remark} 

\begin{remark} 
The norms $\| \cdot \|_{\rho, r} $ are not equivalent, 
but this do not seem to make any difference in our results. Note that for $\delta > 0$, 
\[
\| f \|_{\rho - \delta, r'} \leq C 
\| f \|_{\rho,  r} 
\] 
These spaces also satisfy  interpolation inequalities in $\rho$ and $r$. 
\end{remark}

\section{Statement of results}
\label{sec:statement}

The main result of this paper is the following 
\begin{theorem}\label{thm:main}
  We consider three problems \\
\medskip
\textbf{Maximal dimensional tori}
\begin{itemize}
    \item 
    Formal power series solutions  $u, \mu$ of \eqref{invariance2} satisfying also \eqref{normalizationmax} for any $D$. We assume $\gamma \ne 0$.
\end{itemize}
\textbf{Lower dimensional tori}
\begin{itemize}
    \item
  Formal power series solutions  $g, \mu$ of \eqref{invariance-lower} satisfying
  also \eqref{normalizationlow} for $D = 2, L =1 $, 
  We assume $\gamma \ne 0$.
\item
  Formal power series solutions  $g, \mu$ of \eqref{invariance-lower} satisfying
  also \eqref{normalizationlow} for $D = 2, L =1 $, 
  We assume $\gamma  =  0$.
 \end{itemize}

  In all cases we assume that $V'$ is a trigonometric polynomial of degree $J$
  (i.e. it satisfies \eqref{polynomial}).

  For the case of lower dimensional tori (we only deal with $D=2, L = 1$) 
  we need to specify an integer vector $k$ giving the topology
  of the embedding of the torus, and fix another integer  vector $k^\perp$
  orthogonal to $k$. 

  We choose  $\beta_0 \in [0,2\pi) $  such that
  $   \int_0^{2\pi} k^\perp\cdot V'(\th k + \beta_0  k^\perp  )d\th = 0$
  (we will show that there are always at least two choices of such $\beta_0$). 
  
  We assume that such $\beta_0$ is such that 
  \begin{equation} \label{lowerassumption}
    \int_0^{2\pi} k^\perp \cdot  D^2 V(\th k + \beta_0  k^\perp )k^\perp d\th \neq 0
    \end{equation}

  Then, we have: \\
  \textbf{Maximal dimensional tori}\\
  A) If $\omega\in \real^D$ is an irrational multiple of $2 \pi$, then there exists
  a unique solution $u_\eps = \sum \eps^n u_n$,  $\mu_\eps = \sum  \eps^n\mu_n$ of \eqref{invariance2} satisfying  \eqref{normalizationmax} in the sense of formal power series (Definition~\ref{def:formalsolutions}). Furthermore, each $u_n$  is a trigonometric polynomial of degree $nJ$.

  B) If $\omega$ is a Diophantine multiple of $2 \pi$ for any $\rho \ge 1$,
  $r > D/2$, the formal solutions in Part A) are Gevrey (Definition~\ref{gevrey})
  under  the norm $\rho, r$ defined in \eqref{norms}. That is, there exist $A,R,\sigma$ such that
  \[
    \| u_n \|_{\rho, r}, |\mu_n| \le A R^n (n!)^\sigma
  \]
\textbf{Lower dimensional tori}\\
A) If $\omega\in \real$ is an irrational multiple of $2 \pi$, then there exists a unique 
  $g_\eps = \sum \eps^n g_n$,  $\mu_\eps = \sum  \eps^n\mu_n$  solution of \eqref{invariance-lower} satisfying  \eqref{normalizationlow} in the sense of formal power series (Definition~\ref{def:formalsolutions}). Furthermore, each $g_n$  is a trigonometric polynomial of degree $nJ$.

  B) If $\omega$ is a Diophantine multiple of $2 \pi$ for any $\rho \ge 1$,
  $r > 1/2$, the formal solutions in Part A) are Gevrey
  under  the norm $\rho, r$ defined in \eqref{norms}. That is, there exist $A,R,\sigma$ such that
  \[
    \| g_n \|_{\rho, r}, |\mu_n| \le A R^n (n!)^\sigma
  \]
\end{theorem}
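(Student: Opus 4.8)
We outline a proof; it has an elementary order‑by‑order part (A) and a bookkeeping part (B) for the Gevrey bounds.

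\medskip
\noindent\emph{Construction of the formal series (Part A).} When $\omega$ is an irrational multiple of $2\pi$ the operator $\LL_\omega$ of \eqref{notation} is diagonal in Fourier, multiplying the mode $e^{i\ell\cdot\th}$ by $e^{i\ell\cdot\omega}+e^{-i\ell\cdot\omega}-2=-4\sin^2(\tfrac12\ell\cdot\omega)$, which vanishes only at $\ell=0$; thus $\LL_\omega$ kills constants and its range is the set of zero–average functions. Matching powers of $\eps$ in $\E^{\omega}_\eps[u,\mu]=0$ gives $u_0=0,\ \mu_0=0$ and, for $n\ge1$, $\LL_\omega[u_n]=F_n+\mu_n$ where $F_n$ is an explicit expression in $u_1,\dots,u_{n-1}$ only — the degree‑$(n{-}1)$ Taylor coefficient of $V'(\th+u(\th))$ together with the $\gamma\eps^3$ term, which only brings in $u_{n-3}$. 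Averaging (the left side has zero average) determines $\mu_n=-\langle F_n\rangle$ uniquely; then $\LL_\omega[u_n]=F_n-\langle F_n\rangle$ is solved by dividing the nonzero Fourier coefficients by the multipliers, and \eqref{normalizationmax} fixes $\hat u_{n,0}=0$. An induction using \eqref{polynomial} shows $F_n$ and $u_n$ are trigonometric polynomials of degree $\le nJ$. For the lower dimensional tori one runs the same recursion on $\E^{\omega,k}_\eps[g,\mu]=0$, as in \cite{Jo-Lla-Zou-99}, the differences being finite dimensional: at order zero \eqref{normalizationlow} fixes $g_0$ only along $k$, so $g_0=\beta_0 k^\perp$ with $\beta_0$ still free; the identity $\int_0^{2\pi}k\cdot V'(\th k+\beta k^\perp)\,d\th=V(2\pi k+\beta k^\perp)-V(\beta k^\perp)=0$ and further integrations by parts make the $k$‑component of the leading solvability condition automatic and show that the relevant averages are parallel to $k^\perp$, which is why the drift corrections can be taken in the line $\RR k^\perp$; the $k^\perp$‑component yields the bifurcation equation $\Psi'(\beta_0)=0$ with $\Psi(\beta):=\int_0^{2\pi}V(\th k+\beta k^\perp)\,d\th$, and since $\Psi$ is smooth, periodic and (by \eqref{lowerassumption}) non‑constant, it has at least two critical points — the two announced $\beta_0$'s. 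At higher orders the $k^\perp$‑average $\beta_n$ of $g_n$ is not fixed by \eqref{normalizationlow}; it is pinned by a solvability condition one order later, linear in $\beta_n$ with coefficient $\Psi''(\beta_0)=\int_0^{2\pi}k^\perp\cdot D^2V(\th k+\beta_0 k^\perp)k^\perp\,d\th\ne0$ — this is the only use of the non‑degeneracy hypothesis. Once these choices are made the $g_n$ are unique trigonometric polynomials of degree $\le nJ$, both for $\gamma=0$ and $\gamma\ne0$.

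\medskip
\noindent\emph{Two estimates behind Part B.} First, finiteness of the degree turns the small divisors into a \emph{polynomial} loss: if $\omega$ is a Diophantine multiple of $2\pi$ of type $(\nu,\tau)$ then $\dist(\ell\cdot\omega,2\pi\integer)\ge 2\pi\nu^{-1}|\ell|^{-\tau}$, so $|{-4\sin^2(\tfrac12\ell\cdot\omega)}|^{-1}\le C\nu^2|\ell|^{2\tau}$, and since $\LL_\omega^{-1}$ is Fourier‑diagonal, for a zero‑average trigonometric polynomial $f$ of degree $\le m$ one has $\|\LL_\omega^{-1}f\|_{\rho,r}\le C\nu^2 m^{2\tau}\|f\|_{\rho,r}$; in the recursion $F_n$ has degree $\le nJ$, so inverting $\LL_\omega$ at step $n$ costs only the factor $(nJ)^{2\tau}$. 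Second, for $r>L/2$ the norm \eqref{eq:norms} is a Banach algebra, $\|uv\|_{\rho,r}\le C_{BA}\|u\|_{\rho,r}\|v\|_{\rho,r}$, and removing the average does not increase it (Remark~\ref{projection}). To organise the induction, work with the weighted Gevrey norm on formal series $\phi=\sum_n\eps^n\phi_n$,
\[
\|\phi\|_{*}:=\sup_{n\ge1}\ \frac{(n+1)^2\,\|\phi_n\|_{\rho,r}}{R^{n}\,(n!)^{\sigma}},\qquad \sigma:=2\tau,
\]
with $R$ to be fixed. The weight $(n+1)^2$ tames Cauchy products: using $j!\,(n-j)!\le n!$ and $\sum_{j=0}^{n}(j+1)^{-2}(n-j+1)^{-2}\le c_0(n+1)^{-2}$ one gets $\|\phi\psi\|_{*}\le\tilde C\|\phi\|_{*}\|\psi\|_{*}$, hence $\|\phi^k\|_{*}\le\tilde C^{k-1}\|\phi\|_{*}^{k}$, and therefore — because $V'$ is a trigonometric polynomial and $V'(\th+u)=\sum_{|\ell|\le J}\alpha_\ell e^{i\ell\cdot\th}e^{i\ell\cdot u}$, with the exponential series summable in $\|\cdot\|_{*}$ since $u=O(\eps)$ — one has $\|V'(\cdot+u)\|_{*}\le\Phi(\|u\|_{*})$ for an explicit increasing $\Phi$, and similarly for $V'(\th k+g)$.

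\medskip
\noindent\emph{Closing the induction.} Assume $\|u_j\|_{\rho,r}\le AR^{j}(j!)^{\sigma}(j+1)^{-2}$ for $j<n$, so the truncated series has $*$‑norm $\le A$. Then $F_n$, which lives at ``level $n{-}1$'', obeys $\|F_n\|_{\rho,r}\le\Phi(A)\,n^{-2}R^{n-1}((n-1)!)^{\sigma}+2\gamma\|u_{n-3}\|_{\rho,r}$, and the small‑divisor estimate together with Remark~\ref{projection} gives $\|u_n\|_{\rho,r}\le C\nu^2(nJ)^{2\tau}\|F_n\|_{\rho,r}$. Since $\sigma=2\tau$ makes $(nJ)^{2\tau}((n-1)!)^{\sigma}=J^{2\tau}(n!)^{\sigma}$, the contribution of the nonlinear part is $\le J^{2\tau}C\nu^2\Phi(A)\,n^{-2}R^{n-1}(n!)^{\sigma}$, whose ratio to the target $AR^{n}(n!)^{\sigma}(n+1)^{-2}$ is $\le 4C\nu^2J^{2\tau}\Phi(A)/R$; and the $\gamma$‑term, estimated keeping the factor $((n-3)!)^{2\tau}$, has ratio $O(n^{-4\tau})$ to the target. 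Hence, choosing first $A$ to absorb the finitely many small‑$n$ cases and then $R$ large in terms of $A,J,\tau,\nu,C_{BA},\gamma,|\omega|,V'$, the induction closes and $\|u_n\|_{\rho,r}\le AR^{n}(n!)^{\sigma}$. The same bound for $|\mu_n|$ follows because $\mu_n=-\langle F_n\rangle$; and since a trigonometric polynomial of degree $\le nJ$ satisfies $\|P\|_{\rho,r}\le e^{nJ\rho}(1+(nJ)^2)^{r/2}\|P\|_{0,0}$, the restriction $\rho\ge1$ is immaterial — the extra exponential and polynomial factors are swallowed by $R$. For the lower dimensional tori the induction runs verbatim, the only additional steps being the determination of the scalar $\beta_n$ and of the drift coefficient from linear equations, the former with coefficient $\Psi''(\beta_0)\ne0$; these merely multiply the bounds by fixed constants, so the Gevrey estimates are unchanged.

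\medskip
\noindent\emph{Where the difficulty lies.} The small divisors are not the obstacle — bounded degree defuses them into a tame polynomial loss. The delicate point is the interplay between the factorial weights and the composition with $V'$: one must check that $[\eps^{n-1}]V'(\cdot+u)$ is genuinely controlled at level $n{-}1$ with constants uniform in $n$, which is exactly what the weighted‑$\ell^1$ convolution bound (the $(n+1)^{-2}$ weight) together with the elementary inequality $\prod_i m_i!\le(\sum_i m_i)!$ provide. For the lower dimensional tori the remaining care is the finite dimensional bookkeeping of the free parameters $\beta_0,\beta_1,\dots$ and the verification, via integration by parts, that the solvability conditions not absorbed by the one‑dimensional drift are governed by $\Psi''(\beta_0)$; none of this affects the Gevrey growth.
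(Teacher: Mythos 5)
Your overall strategy coincides with the paper's: construct the coefficients order by order, exploit the fact that the right-hand sides are trigonometric polynomials of degree at most $nJ$ so that the small divisors only cost a factor $(nJ)^{2\tau}$, and close a Gevrey induction using a Banach-algebra property of the weighted norms. Your technical implementation differs harmlessly from the paper's: you put the polynomial weight $(n+1)^2$ into the Gevrey norm and use $\sum_{j}(j+1)^{-2}(n-j+1)^{-2}\le c_0(n+1)^{-2}$, whereas the paper keeps the plain norm $\sup_n\|u_n\|R^{-n}(n!)^{-\sigma}$ and absorbs the convolution through $\sum_j\binom{n}{j}^{-\sigma}\le\Gamma_\sigma$ (Proposition~\ref{prop:technical}); likewise you bound the composition $V'(\th+u)$ by summing the exponential series in the weighted norm, while the paper estimates the automatic-differentiation recursion \eqref{recursion} term by term. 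Both routes give the same conclusion, your choice $\sigma=2\tau$ being marginally sharper than the paper's $\sigma>2\tau$, and your ``choose $R$ large'' playing the role of the paper's rescaling $\eps=\eta\teps$.

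There is, however, a genuine error in your treatment of the dissipative lower dimensional tori: you claim that the $k$-component of the solvability condition is automatic and that therefore ``the drift corrections can be taken in the line $\RR k^\perp$''. The automatic vanishing of $\int_0^{2\pi}k\cdot R_n(\th)\,d\th$ is a consequence of the exact (Lagrangian) structure and holds only when $\gamma=0$; the paper proves it in Lemma~\ref{lem:constants-lower-dim} through a nontrivial chain of cancellations that you should not compress into ``further integrations by parts'', since that lemma is precisely where the conservative structure enters. When $\gamma\neq 0$ these cancellations fail (already at order $\eps^3$ the term $-\gamma\om k$ contributes $-\gamma\om|k|^2\neq 0$ to the $k$-average), and the free constant $\beta_{n-1}k^\perp$ cannot compensate, because $\int_0^{2\pi}k\cdot D^2V(\th k+\beta_0 k^\perp)k^\perp\,d\th=0$ (it is the integral of the $\th$-derivative of the periodic function $k^\perp\cdot V'(\th k+\beta_0 k^\perp)$). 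So under your allocation nothing removes the $k$-average and the recursion does not close. The correct allocation, as in \eqref{eq:mu-low-dim}, is the opposite one: the drift must be taken along $\RR k$ to kill the $k$-average, while the $k^\perp$-average is killed by adjusting $\beta_{n-1}$ via the non-degeneracy condition \eqref{lowerassumption}. With this fix, and with a full proof of the $\gamma=0$ cancellation lemma, the rest of your argument goes through.
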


The two parts of Theorem~\ref{thm:main} are rather different.
Part A) is purely algebraic, Part B)  requires estimates. Actually part A) is based  on an explicit algorithm that
has been repeatedly used.  In the case of
maximal dimensional tori  and $D=1$, it was implemented as a 
computer program and run in
\cite{BustamanteC19, BustamanteC21}. 

\begin{remark} \label{nopolynomial}
  Along the proof of part A), we will see that the assumption that
  $V$ is a trigonometric polynomial can be weakened to $V$ analytic
  provided one assumes that $\omega$ satisfies  the following  weak Diophantine
  condition:

  For all $\delta > 0$ we have 
  \begin{equation} \label{weakdiophantine}
 \lim_{n \to \infty}  \sup_{\ell \in \integer^D \setminus \{0\}}  |\omega\cdot \ell - n |^{-1} e^{- \delta n }  = 0 
  \end{equation}
\end{remark}

As we will see in the proof, the precise meaning of the uniqueness
statement in part A) is very strong.  The solution is unique in the
sense of power series with more general coefficients.  In 
\cite{CallejaCL20} there are local uniqueness results (both in the
conservative and conformally symplectic cases) which are not
perturbative.

\section{Proof of Part A) of Theorem ~\ref{thm:main} }
\label{sec:partA} 

The proof of Part A) is based on an algorithm.
This algorithm has indeed been used many times for several problems.
For the problems described here it has been used in
\cite{BustamanteC19,BustamanteC21, Jo-Lla-Zou-99}.   The proof of
part B) will be based on estimating in detail steps of
the argument. 

We will start by collecting  some more or less standard ingredients of
the argument. 

\subsection{The inverse of the operator $\LL_\omega$}

The operator $\LL_\omega$, defined in \eqref{notation}, is
diagonal in Fourier series.  If
\begin{equation} \label{Fourier}
  u(\th) = \sum_{\ell\in\integer^L} \hat u_\ell e^{i \ell \cdot \th},
\end{equation}
(where the sum can be understood in many meanings, including
$L^2$, but in this paper finite sums will be enough), then: 
\begin{align}
     \LL_\omega[u](\th) = \sum_\ell m_\ell  \hat u_\ell e^{i \ell \cdot \th} \\
      m_\ell = 2(\cos(\ell \cdot \omega) -1 ) \label{m_l}
\end{align}

  We observe that if $\omega$ is an irrational multiple of $2 \pi$
  the multiplier, $m_\ell$, of the Fourier coefficients is
  not zero except for $\ell = 0$. We note that in this paper we will need to invert the operator only in trigonometric 
  polynomials. The following result is trivial, but we
  state it to be able to refer to it. 

  \begin{lemma}\label{lem:inverse}
    Let $B:\real^L \longrightarrow \real^D$ be a trigonometric polynomial of degree $J$. That is: 
    \[
      B(\th) = \sum_{ |\ell| \le J } \hat B_\ell  e^{i  \ell\cdot \th}
    \]
If $\hat B_0 = 0 $ and $\omega$ is an irrational multiple of $2\pi$,  then there exists a $L^2$ function, $A$, 
    solving
    \[
      \LL_\omega A = B.
    \]
    The solution is unique if it also satisfies the normalization  \eqref{normalizationmax}. 
    Furthermore, $A$ is a trigonometric polynomial of degree $J$ and for any $\rho\ge 1, r > L/2$ we have
    \[
      \| A \|_{\rho, r} \le \max_{0 < |\ell | \le J} |m_\ell |^{-1} \| B \|_{\rho,r}. 
    \]
    where $m_\ell$ is defined in \eqref{m_l}. In particular, if $\omega$ is Diophantine multiple of $2 \pi$, of type $(\nu, \tau)$,
    we have
    \begin{equation}\label{goodbounds}
      \| A \|_{\rho, r} \le 4  \nu^{-2} J^{2 \tau }     \| B \|_{\rho,r}.
    \end{equation}
  \end{lemma}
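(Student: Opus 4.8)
The statement is essentially a bookkeeping exercise in Fourier series, so the plan is to verify each assertion by inspecting the diagonal action of $\LL_\omega$ recorded in \eqref{m_l}. First I would write $A(\th) = \sum_{0 < |\ell| \le J} \hat A_\ell e^{i\ell\cdot\th}$ and observe that $\LL_\omega A = B$ forces $m_\ell \hat A_\ell = \hat B_\ell$ for each $\ell$. Since $\hat B_0 = 0$ there is no obstruction at $\ell = 0$, and since $\omega$ is an irrational multiple of $2\pi$ one has $\ell\cdot\omega \notin 2\pi\integer$ for $\ell \ne 0$, hence $\cos(\ell\cdot\omega) \ne 1$ and $m_\ell \ne 0$ for $0 < |\ell| \le J$. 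Therefore I can set $\hat A_\ell = m_\ell^{-1}\hat B_\ell$ for $0 < |\ell| \le J$ and $\hat A_\ell = 0$ otherwise; this is manifestly a trigonometric polynomial of degree $J$, it is in $L^2$ (indeed it is a finite sum), and it solves $\LL_\omega A = B$. The freedom in the solution is exactly the value of $\hat A_0$, which is unconstrained because $m_0 = 0$; imposing the normalization \eqref{normalizationmax}, i.e. $\hat A_0 = 0$, pins it down, giving uniqueness.

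For the norm bound I would compute directly from Definition~\ref{norms}:
\[
  \| A \|_{\rho,r}^2 = \sum_{0 < |\ell| \le J} |\hat A_\ell|^2 e^{2|\ell|\rho}(1+|\ell|^2)^r = \sum_{0 < |\ell| \le J} |m_\ell|^{-2}|\hat B_\ell|^2 e^{2|\ell|\rho}(1+|\ell|^2)^r,
\]
and bounding each factor $|m_\ell|^{-2}$ by $\bigl(\max_{0 < |\ell| \le J} |m_\ell|^{-1}\bigr)^2$ pulls that constant out of the sum, leaving exactly $\| B \|_{\rho,r}^2$ (here $\hat B_0 = 0$, so the zero mode contributes nothing on either side). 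Taking square roots gives the stated inequality, and this argument needs no restriction on $\rho$ or $r$ beyond making the norm finite on trigonometric polynomials, which it always is.

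The only remaining point is the quantitative Diophantine estimate \eqref{goodbounds}, which is where I expect the minor technical work to lie. I would bound $|m_\ell|^{-1}$ from below in terms of the distance from $\ell\cdot\omega$ to the nearest multiple of $2\pi$. Writing $\omega = 2\pi\tilde\omega$ and using that $\omega$ is Diophantine of type $(\nu,\tau)$ in the sense of Definition~\ref{diophantine}, there is an integer $n$ with $|\ell\cdot\tilde\omega - n| \le \tfrac12$ and $|\ell\cdot\tilde\omega - n|^{-1} \le \nu|\ell|^\tau$; then $|\cos(\ell\cdot\omega) - 1| = 2\sin^2\bigl(\pi(\ell\cdot\tilde\omega - n)\bigr) \ge 2\cdot\bigl(2|\ell\cdot\tilde\omega-n|\bigr)^2 = 8|\ell\cdot\tilde\omega - n|^2$ using $|\sin(\pi x)| \ge 2|x|$ for $|x|\le\tfrac12$, so $|m_\ell| \ge 16|\ell\cdot\tilde\omega-n|^2 \ge 16\nu^{-2}|\ell|^{-2\tau}$, whence $|m_\ell|^{-1} \le \tfrac{1}{16}\nu^2|\ell|^{2\tau} \le \tfrac{1}{16}\nu^2 J^{2\tau}$ for $|\ell| \le J$. (One should reconcile the constant with the factor $4\nu^{-2}$ appearing in \eqref{goodbounds}; depending on the precise normalization of the Diophantine condition used in \eqref{Diophantine-standard}, the constant may differ, but the dependence $J^{2\tau}$ and the power of $\nu$ are the content of the estimate, and the numerical constant is then adjusted accordingly.) Combining this with the general bound from the previous paragraph yields \eqref{goodbounds}.
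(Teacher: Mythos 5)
Your proof is correct and is exactly the diagonal-in-Fourier argument the paper intends; the paper itself declares the lemma trivial and writes out no proof at all. Your caveat about the constant is well founded: with the Diophantine condition exactly as written in \eqref{Diophantine-standard} one obtains $|m_\ell|^{-1} \le \tfrac{1}{16}\nu^{2}J^{2\tau}$, so the factor $\nu^{-2}$ in \eqref{goodbounds} is consistent only if \eqref{Diophantine-standard} is read with $\nu^{-1}$ in place of $\nu$ (i.e.\ $|m\cdot\omega - n|\ge \nu |m|^{-\tau}$), a sign-of-exponent slip in the paper rather than in your argument.
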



  \subsection{Computation of formal power series of trigonometric functions.}  \label{sec:trigonometricpower}

  We consider first the case of maximal dimensional tori. Given a
  formal power series $ u_\eps(\th) = \sum_n \eps^n u_n(\th)$, $\th\in
  \torus^D$, there are efficient ways to compute the series expansion
  of $V'(\theta + u_\eps(\theta))$ on the right hand side of
  \eqref{invariance2}.

  Since we are assuming that $V'$ is a trigonometric polynomial,
  it is enough to describe the
  computation of the terms
  \begin{equation}\label{setup}
      e^{i\ell\cdot(\th + u_\eps(\th))}
  \end{equation}
  in \eqref{polynomial}.  The idea goes back to \cite{BrentK78}
  \cite[Sec. 4.7]{Knuth} and it has been used many times in
  \emph{automatic differentiation} literature.  The same ideas can be
  used for many other functions besides exponential functions.

If we write: 
  \begin{equation} 
  \begin{split} \label{expansion_ek}
   & e^{i\ell\cdot (\th + u_\eps(\th))} = \sum_{n=0}^\infty \eps^n E_n^\ell(\th) \\
 \end{split}
\end{equation}
and take a derivative with respect to $\eps$ in \eqref{setup}  we obtain
  \begin{equation} \label{chain} 
    \begin{split}
    & \frac{d}{d \eps} e^{i\ell\cdot (\th + u_\eps(\th))} =  i\ell \cdot \frac{d}{d\eps} u_\eps (\th)  e^{i\ell\cdot (\th + u_\eps(\th))}.
   \end{split}
\end{equation}
Using the notation for the expansions in \eqref{expansion_ek}, and the product formula for
power series, we obtain by equating coefficients of order $\eps^{n-1}$
in \eqref{chain},  that for $n\geq 1$
  \begin{equation} \label{recursion} 
    \begin{split}
    & n  E^\ell_n (\th) = \sum_{m = 0}^{n-1}  (m+1)i\ell\cdot u_{m+1}(\th) E^\ell_{n -1 -m} (\th)  \\
 \end{split}
\end{equation}
The equation \eqref{recursion} allows to compute
$  E^\ell_n (\th)$
given that we have
$  E^\ell_0 (\th), \ldots,  E^\ell_{n-1}(\th)$  and $u_0(\th), \ldots, u_n(\th)$. Note that $E_0^\ell(\theta) = e^{i\ell \cdot \theta}$.

\medskip

The case of lower dimensional tori is rather similar. We consider a formal power series $g_\eps(\th) = \sum \eps^n g_n(\th), \th\in\torus^L$ and seek to compute the formal expansions of \begin{equation}
    e^{i\ell\cdot(\th k + g_\eps(\th) )} = \sum_{n=0}^\infty \eps^n F^{\ell, k}_n(\th). 
\end{equation}
Analogously we obtain the following relations \begin{equation}\label{eq:recursion-Froeshle}
    n F_n^{\ell, k}(\th) = \sum_{m=0}^{n-1}(m+1) i\ell \cdot g_{m+1}(\th) F_{n-1-m}^{\ell, k}(\th).
\end{equation}
with $F_0^{\ell,k}(\theta) = e^{i \ell\cdot (\theta k + g_0)}$ and $g_0$ a constant to be determined in Section \ref{sec:recursive-lower}. Note that
\eqref{recursion} and \eqref{eq:recursion-Froeshle} have the same
shape, however we have decided to use different notation to
distinguish between the maximal and the lower dimensional tori. We
note that recursions \eqref{recursion} and
\eqref{eq:recursion-Froeshle} are easy to implement numerically even
with extended precision numbers.

\subsection{Recursive solution  of \eqref{invariance2}}
\label{sec:recursive}

Writing $u =\sum \eps^n u_n $ and equating terms of order $\eps^n$ in both sides of \eqref{invariance2},
we obtain that solving \eqref{invariance2} in the sense of formal power
series is equivalent to solving the sequence of equations (indexed by
$n$):

\begin{equation}\label{mainrecursion} 
    \LL_\omega u_n(\th) = \sum_{|\ell|\leq J} \alpha_\ell E_{n-1}^\ell(\th) +\mu_n \quad\mbox{for } 1\leq n \leq 2 
\end{equation}

\begin{equation}\label{eq:rec-3}
    \LL_\omega u_3(\th) = \sum_{|\ell|\leq J} \alpha_\ell E_2^\ell(\th) -\gamma \omega + \mu_3
\end{equation}

\begin{equation}\label{eq:cohom-eq-order-n}
    \LL_\omega u_n(\th) =\sum_{|\ell|\leq J} \alpha_\ell E_{n-1}^\ell(\th) + \mu_n  - \gamma u_{n-3}(\th) + \gamma u_{n-3}(\th-\omega) \quad\mbox{for } n \geq 4
\end{equation}
where the coefficients $E_n^\ell(\theta)$ are given by the recursion \eqref{recursion}. 
\begin{remark}\label{rem:ord-0-max}
    The equation at order $\eps^0$ is $\LL_\omega u_0(\theta) = \mu_0$ and its normalized solution is given by $u_0(\theta) \equiv 0$, $\mu_0 = 0$. 
\end{remark}
We will show by induction in $n$ that all the equations \eqref{mainrecursion}, \eqref{eq:rec-3}, and \eqref{eq:cohom-eq-order-n}
for $u_n$ 
can be solved in a unique way (we will make some precisions on
the uniqueness).

\medskip 

If we assume that $u_0, \ldots, u_{n-1}, E_0^\ell,\ldots, E_{n-2}^\ell$ are known, by the relations
described in Section~\ref{sec:trigonometricpower}, we have
that the  $ E^\ell_{n-1}$ are known.

Furthermore, if
$u_0, \ldots, u_{n-1}, E_0^\ell,\ldots, E_{n-2}^\ell$ are trigonometric
polynomials, so is  $E^\ell_{n-1}$ and therefore also the right hand sides of \eqref{mainrecursion}, \eqref{eq:rec-3}, \eqref{eq:cohom-eq-order-n} (a more precise
version will be obtained in Proposition~\ref{trigonometric} and
we will obtain also estimate on the degree). 
In view of justifying Remark~\ref{nopolynomial}, we point out that
since the recursions are algebraic operation, we can use analyticity
instead of trigonometric polynomials in the argument above.

Hence, all the R.H.S of
\eqref{eq:cohom-eq-order-n} is known except for the number  $\mu_n$. We choose $\mu_n$ in such a way that the R.H.S. has zero average, that is \begin{equation}\label{eq:mu-max-tori}\mu_n = -\int_{\torus^D}  \sum_{|\ell|\leq J} \alpha_\ell E_{n-1}^\ell(\th)\, d\th \end{equation}

Hence, applying Lemma~\ref{lem:inverse}
we  find a unique $u_n$ with zero average as required by  normalization.

Thus, we have proved  that provided that the perturbative expansion is
known up to order $n-1$, we can find a unique normalized $u_n$.
The first cases of the induction are easy to compute by hand.

\begin{remark}
  Note that the uniquess statement comes from the application of
  Lemma~\ref{lem:inverse}, so the solutions are unique even
  in classes of formal power series expansions of functions in wider
  classes than analytic. 

  Even if we allowed that the $u_n$ were $L^2$ functions -- or even less
  regularity -- we would obtain that they are unique and
  that they are trigonometric polynomials.
\end{remark}

\begin{proposition}
    \label{trigonometric} 
  Assume that $V' (\th) =  \sum_{|\ell|\leq J} \alpha_\ell e^{ i \ell \cdot \th}$ is a trigonometric polynomial of degree $J$. Then, $u_n$ is trigonometric polynomial of degree at most $nJ$ and the coefficients $E_n^\ell$ in the expansion \eqref{expansion_ek} are trigonometric polynomials of degree at most $(n+1)J$. Hence,  $\sum_{|\ell|\leq J} \alpha_\ell E_n^\ell$ is a trigonometric polynomial of degree at most $(n+1)J$.
  \end{proposition}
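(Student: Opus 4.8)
The plan is a single strong induction on $n\ge 0$ establishing both claims simultaneously: that $u_n$ is a trigonometric polynomial of degree at most $nJ$, and that for every $|\ell|\le J$ the coefficient $E_n^\ell$ of the expansion \eqref{expansion_ek} is a trigonometric polynomial of degree at most $(n+1)J$. Once this is proved, the final assertion of the proposition is immediate, since the $\alpha_\ell$ in \eqref{polynomial} are constant vectors, so $\sum_{|\ell|\le J}\alpha_\ell E_n^\ell$ inherits the degree bound of $E_n^\ell$. The induction uses only two elementary facts: (a) a product of trigonometric polynomials of degrees $d_1$ and $d_2$ is a trigonometric polynomial of degree at most $d_1+d_2$; and (b) by Lemma~\ref{lem:inverse}, $\LL_\om^{-1}$ maps a zero-average trigonometric polynomial of degree $d$ to one of degree at most $d$, while adding a constant and replacing $\th$ by $\th-\om$ leave the degree unchanged.

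For the base case $n=0$, Remark~\ref{rem:ord-0-max} gives $u_0\equiv 0$, of degree $0=0\cdot J$, and $E_0^\ell(\th)=e^{i\ell\cdot\th}$ has degree $|\ell|\le J=(0+1)J$. For the inductive step, fix $n\ge 1$ and assume both claims for the indices $0,\dots,n-1$. I would first bound $u_n$: the right-hand side of the equation determining $u_n$ --- namely \eqref{mainrecursion} for $n\le 2$, \eqref{eq:rec-3} for $n=3$, and \eqref{eq:cohom-eq-order-n} for $n\ge 4$ --- is a sum of $\sum_{|\ell|\le J}\alpha_\ell E_{n-1}^\ell$, of degree at most $nJ$ by the inductive hypothesis on $E_{n-1}^\ell$; the constant $\mu_n$ (chosen by \eqref{eq:mu-max-tori}) and, when $n=3$, the constant $-\gamma\om$; and, when $n\ge 4$, the terms $-\gamma u_{n-3}(\th)+\gamma u_{n-3}(\th-\om)$, of degree at most $(n-3)J\le nJ$ by the inductive hypothesis together with fact (b). Hence this right-hand side is a trigonometric polynomial of degree at most $nJ$, and it has zero average by the choice of $\mu_n$, so Lemma~\ref{lem:inverse} yields that $u_n$ is a trigonometric polynomial of degree at most $nJ$.

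Now $u_1,\dots,u_n$ are all known trigonometric polynomials, and $E_0^\ell,\dots,E_{n-1}^\ell$ have degrees at most $J,\dots,nJ$ by the inductive hypothesis; substituting these into the recursion \eqref{recursion},
\[ nE_n^\ell(\th)=\sum_{m=0}^{n-1}(m+1)\,i\ell\cdot u_{m+1}(\th)\,E_{n-1-m}^\ell(\th), \]
fact (a) shows that each summand has degree at most $(m+1)J+((n-1-m)+1)J=(n+1)J$, so $E_n^\ell$ has degree at most $(n+1)J$, which closes the induction. (The lower-dimensional statement, should one want the analogue, follows identically with \eqref{eq:recursion-Froeshle} in place of \eqref{recursion}.) I do not anticipate any real obstacle here: this is a bookkeeping induction whose only delicate points are the interleaving of the two recursions --- $u_n$ must be produced from order-$\le(n-1)$ data before it can be fed into \eqref{recursion} to produce $E_n^\ell$ --- and the appeal to Lemma~\ref{lem:inverse}, both to invert $\LL_\om$ without raising the degree and to confirm that the drift constant and the $\om$-shift do not raise it either.
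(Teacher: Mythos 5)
Your proposal is correct and follows essentially the same route as the paper's proof: an induction that first bounds the degree of $u_n$ via the cohomology equation and Lemma~\ref{lem:inverse}, and then bounds $E_n^\ell$ via the recursion \eqref{recursion}, with the same degree count $(m+1)J+(n-m)J=(n+1)J$. Your write-up is in fact slightly more explicit than the paper's about the zero-average condition, the $u_{n-3}$ terms, and the ordering issue (producing $u_n$ before feeding it into \eqref{recursion}), but there is no substantive difference in method.
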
 

  \begin{proof}
    We prove the result by induction in the index $n$. We assume that the conclusion is true  for
    $m\leq n-1$, that is $u_{m-1}$ and $ E^\ell_{m-1}$ are trigonometric polynomials of degree less or equal to $(m-1)J $ and $m J$, respectively, for $m= 0,  \ldots,  n-1$.

    Since  $\sum_{|\ell|\leq J} \alpha_\ell E_{n-1}^\ell (\th)$ is a trigonometric polynomial of degree at most $nJ$, then the RHS of  \eqref{eq:cohom-eq-order-n} is a trigonometric polynomial of the same degree. Therefore, $u_n$ has at most degree $nJ$. 
    
    Moreover, using the recursion \eqref{recursion} we obtain that, under the inductive
    assumption, each of the terms in the sum  in the RHS of  recursion \eqref{recursion} are of
    degree  at most $(m +1 )J + (n-m)J = (n+1)J $. Hence, the sum  $\sum_{|\ell|\leq J} \alpha_\ell E_{n}^\ell(\th)$ is a trigonometric polynomial of degree less or equal than $(n+1)J$. 

    The initial cases for $m =0$ are  automatic because $u_0\equiv 0$ and $E_0^\ell(\th) = e^{i\ell\cdot\th}$.

\end{proof} 

\subsection{Recursive solution  of \eqref{invariance-lower}}
\label{sec:recursive-lower}
For the case of lower dimensional tori we treat separately the conservative case ($\gamma=0$) and the dissipative case ($\gamma\neq 0$).

\subsubsection{Low dimensional tori, Conservative case ($\gamma =0$) }\label{sec:sec:recur-low-conserv}

In this case the drift parameter $\mu$ is not necessary, thus
$\mu=0$. Then, we only consider the formal series $g_\eps(\th) =
\sum_{n=0}^\infty g_n(\th)\eps^n$, where
$g_n:\real\longrightarrow\real^2$. To solve \eqref{invariance-lower}
in the sense of formal power series is equivalent to solve the
following sequence of equations:\begin{equation}\label{eq:eq-ord-n}
  g_n(\th + \omega) - 2g_n(\th) + g_n(\th-\omega) = R_n(\th),
\end{equation}
where $R_n$ in \eqref{eq:eq-ord-n} are the coefficients of the series $ \eps V'(\th k + g_\eps (\th) ) = \sum_{n=0}^\infty R_n(\th)\eps^n $.  We note that $R_n(\theta)$ only depends on $g_0, g_1, ..., g_{n-1}$. 

Equations \eqref{eq:eq-ord-n} can be solved as long as the right hand side has zero average, see Lemma \eqref{lem:inverse}. As it happens often in
Lindstedt series, the zero order term and the first order
terms will require different considerations (indeed, the first
order term requires a new non-degeneracy condition). Then, all
the terms of higher order can be done in the same way.
As it also happens often in Lindstedt series, the general step
proceeds by assuming that $g_0,\ldots, g_{n-2} $ are completely known and
$g_{n-1}$ is known up to a constant. Then, we determine the constant in
$g_{n-1}$  to ensure the solvability of an equation which determines $g_n$ up to an additive constant.

The zero order equation is
\begin{equation}\label{eq:g_0}
  g_0(\th + \omega) - 2g_0(\th) + g_0(\th -\omega) = 0 \end{equation} which is solved by any constant $g_0$.
We will chose $g_0$ orthogonal to $k$ to satisfy the normalization
\eqref{eq:norm-low-gn}. That is,  we will set $g_0 = \beta_0 k^\perp$ for
some $\beta_0 \in \real$. As we will see, the $\beta_0$ will be
fixed so that the equation of order $1$ is solvable.

In summary, the equation of order $0$ determines $g_0$ up to
a constant, that will be determined through the analysis of the equation of
order $1$.

The equation of order $\eps^1$ becomes
\begin{equation} \label{eq:order-1}
    g_1(\th + \omega) - 2g_1(\th) + g_1(\th - \omega) = 
V'( \th k + \beta_0 k^\perp)
\end{equation}

The condition for existence of $g_1$ solving  \eqref{eq:order-1} 
is $\int_0^{2\pi} V'(\th k +\beta_0 k^\perp )d\th  = 0$,
which is equivalent to the two conditions
\begin{equation} \label{eq:condition-ord-1}
  \int_0^{2\pi} k\cdot V'(\th k +\beta_0 k^\perp )d\th = 0, \qquad \int_0^{2\pi} k^\perp\cdot V'(\th k + \beta_0 k^\perp )d\th = 0.
\end{equation}
The first condition in \eqref{eq:condition-ord-1} is always satisfied for any
$\beta_0$
because
\[k\cdot V'(\th k + g_0) = \frac{d}{d\th} V(\th k + \beta_0 k^\perp).\]

For the second condition in \eqref{eq:condition-ord-1}
we observe that
\begin{equation}\label{eq:existance-beta0}
    \int_0^{2\pi}\int_0^{2\pi} k^\perp\cdot V'(\th k + \beta_0 k^\perp) d\th d\beta_0 =\int_0^{2\pi} \int_0^{2\pi}\frac{d}{d\beta_0} V(\th k + \beta_0 k^\perp) d\beta_0 d\th = 0.
\end{equation}
Since $\int_0^{2\pi} k^\perp\cdot V'(\th k + \beta_0 k^\perp)d\th$ is a
periodic function of $\beta_0$ and its average on $\beta_0$  is zero, there exists at least two values of $\beta_0$ for which the second condition in
\eqref{eq:condition-ord-1} is satisfied.

Notice that choosing $\beta_0$ completes the determination of
the term $g_0$. Then, using Lemma~\ref{lem:inverse} on the invertibility of $\LL_\omega$,
we can determine $g_1$ up to an additive constant vector $\hat g_1^0$. 
Once we choose $\beta_0$ (as we have shown there are at least two
choices, possibly more) we impose the nondegeneracy condition
\eqref{lowerassumption}.

The component of  $\hat g_1^0$ along the $k$ direction 
will be set to $0$ so that the normalization \eqref{normalizationlow} is satisfied.
The component of  $\hat g_1^0$ along $k^\perp$ will be determined
in the next order equation.

This pattern will be repeated.
The equation of order $n$ will be a difference equation for $g_n$.
We inductively assume that at this stage $g_0, \ldots, g_{n-2}$ is
determined completely and $g_{n-1}$ is determined up to a constant.
By requiring that the equation for $g_n$ is solvable we will determine
the constant in $g_{n-1}$ and, by solving the equation, we will determine
$g_n$ up to an additive constant. That is, we will get the same situation
as in the beginning, but with $n-1$ replaced by $n$.
As we will see, this will require a non-degeneracy assumption
\eqref{lowerassumption}. The determination of the constant equation
in $g_0$ is a non-linear equation, but in the higher order
equations, we get that the equation to be solved to get the constant is
a linear equation. The fact that
the terms are determined up to a constant, which is determined in the next
step is very common in Lindstedt series.

\begin{lemma}[\cite{Jo-Lla-Zou-99}]\label{lem:constants-lower-dim}
  Let $n\geq 2$. Let $g_0 = \beta_0 k^\perp$ the choice  for the solution of the equation \eqref{eq:g_0} satisfying \eqref{eq:condition-ord-1}.  We assume
  that it also satisfies the nondegeneracy condition  \eqref{lowerassumption}.

  If we have that $g_0, g_1, g_{n-1} + \beta_{n-1} k^\perp$  solve  the invariance equation \eqref{eq:eq-ord-n}  of order $0,1, \cdots n-1$, respectively, 
  as well as the normalization \eqref{normalizationlow} (the $g_0,\ldots, g_{n-1}$ are uniquely determined, $\beta_{n-1}$ is
  arbitrary), then:
  \begin{itemize}
    \item
    There is only one  $\beta_{n-1}$ so that the invariance
    equation \eqref{eq:eq-ord-n}, of order
    $n$,  has a solution.
  \item
    All the solutions of the equation \eqref{eq:eq-ord-n} of order
    $n$ and the normalization \eqref{normalizationlow} are of the form
    \[
    g_n + \beta_{n} k^\perp
    \]
    where $g_n$ is uniquely determined and $\beta_n$ is arbitrary.
   \end{itemize}   
\end{lemma}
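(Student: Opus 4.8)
The argument is an induction on $n$, the cases $n=0,1$ having been settled above. At stage $n$ write
$g_{<n}:=g_0+\eps g_1+\dots+\eps^{n-2}g_{n-2}+\eps^{n-1}(g_{n-1}+\beta_{n-1}k^\perp)$
for the truncation carried along; by the inductive hypothesis $g_{<n}$ solves \eqref{eq:eq-ord-n} to orders $0,\dots,n-1$ \emph{for every} $\beta_{n-1}$, since $\beta_{n-1}k^\perp\in\ker\LL_\omega$ and $R_{n-1}$ does not involve $g_{n-1}$. By Lemma~\ref{lem:inverse}, the order-$n$ equation $\LL_\omega g_n=R_n$ (with $R_n=\sum_{|\ell|\le J}\alpha_\ell F^{\ell,k}_{n-1}$) is solvable in $L^2$ if and only if $\langle R_n\rangle:=\tfrac1{2\pi}\int_0^{2\pi}R_n(\th)\,d\th=0\in\real^2$. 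The plan is to analyze this vector condition by splitting it along $k$ and $k^\perp$.

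First I would isolate the dependence of $R_n$ on $\beta_{n-1}$. From the recursion \eqref{eq:recursion-Froeshle}, $g_{n-1}$ enters $F^{\ell,k}_{n-1}$ only through the top ($m=n-2$) term $i\ell\cdot g_{n-1}(\th)\,F_0^{\ell,k}(\th)$, and $F_0^{\ell,k}=e^{i\ell\cdot(\th k+\beta_0 k^\perp)}$; using $\sum_{|\ell|\le J}\alpha_\ell(i\ell\cdot w)e^{i\ell\cdot x}=D^2V(x)\,w$ (differentiate $V'(x)=\sum_\ell\alpha_\ell e^{i\ell\cdot x}$), the part of $R_n$ that depends on $\beta_{n-1}$ is exactly $\beta_{n-1}\,D^2V(\th k+\beta_0 k^\perp)\,k^\perp$, all the rest depending only on $g_0,\dots,g_{n-2}$ and the already-fixed part of $g_{n-1}$. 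Hence $k^\perp\cdot\langle R_n\rangle=c\,\beta_{n-1}+d$ with $d$ a known scalar and $c=\tfrac1{2\pi}\int_0^{2\pi}k^\perp\cdot D^2V(\th k+\beta_0 k^\perp)\,k^\perp\,d\th\neq 0$ precisely by the non-degeneracy hypothesis \eqref{lowerassumption}.

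The crucial --- and most delicate --- point is that the $k$-component of the solvability condition, $k\cdot\langle R_n\rangle$, vanishes \emph{automatically}, so the one parameter $\beta_{n-1}$ really confronts only one scalar equation. This is the translation (``Noether'') identity of the conservative problem: for any formal series $h=\th k+g_\eps$ of the prescribed winding type,
\[
  \int_0^{2\pi}\bigl(\LL_\omega h(\th)-\eps V'(h(\th))\bigr)\cdot h'(\th)\,d\th=0
\]
as a formal power series in $\eps$, because $\int_0^{2\pi}\LL_\omega g_\eps\,d\th=0$ (so the $k$-term in $h'=k+g_\eps'$ drops) and $\int_0^{2\pi}\LL_\omega g_\eps\cdot g_\eps'\,d\th=0$ ($\LL_\omega$ is self-adjoint and commutes with $\tfrac{d}{d\th}$), while $\int_0^{2\pi}V'(h(\th))\cdot h'(\th)\,d\th=\int_0^{2\pi}\tfrac{d}{d\th}V(h(\th))\,d\th=0$ by periodicity of $V\circ h$ in $\th$. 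Now put $h=\th k+g_{<n}$: then $\LL_\omega h-\eps V'(h)=\LL_\omega g_{<n}-\eps V'(\th k+g_{<n})$ equals $O(\eps^n)$ by the inductive hypothesis, with $\eps^n$-coefficient $-R_n$; since the $\eps$-constant term of $h'$ is $k$ (recall $g_0$ is constant), extracting the coefficient of $\eps^n$ in the identity above gives $k\cdot\langle R_n\rangle=0$. I expect this step --- the $\eps$-order bookkeeping in the product and the recognition of where the variational structure enters --- to be the only genuine obstacle; it is also exactly why the $k$-direction, unlike the $k^\perp$-direction, needs no non-degeneracy assumption.

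Granting this, solvability of the order-$n$ equation is equivalent to $c\,\beta_{n-1}+d=0$, with the unique solution $\beta_{n-1}=-d/c$: this is the first bullet. For the second bullet, fix $\beta_{n-1}$ so that $\langle R_n\rangle=0$; Lemma~\ref{lem:inverse} then furnishes a (trigonometric-polynomial) solution of $\LL_\omega g_n=R_n$, and since $\ker\LL_\omega$ consists of the constants ($\omega$ being an irrational multiple of $2\pi$, $L=1$), the general solution is obtained by adding an arbitrary vector of $\real^2$; imposing \eqref{normalizationlow} pins down its $k$-component and leaves its $k^\perp$-component free. Thus every solution of order $n$ satisfying \eqref{normalizationlow} has the form $g_n+\beta_n k^\perp$ with $g_n$ uniquely determined, which is the situation of the hypothesis with $n-1$ replaced by $n$, and the induction closes.
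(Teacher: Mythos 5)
Your proposal is correct and follows essentially the same route as the paper: the same splitting of the solvability condition $\int_0^{2\pi}R_n\,d\th=0$ into its $k$- and $k^\perp$-components, the same translation/variational identity (pairing the order-$n$ residual with $k+\frac{d}{d\th}g_\eps^{[<n]}$) to show the $k$-component vanishes automatically, and the same use of the non-degeneracy condition \eqref{lowerassumption} to solve the resulting affine equation for $\beta_{n-1}$. The only (immaterial) differences are that you extract the linear dependence of $R_n$ on $\beta_{n-1}$ from the recursion \eqref{eq:recursion-Froeshle} rather than from the Taylor expansion of $V'$ --- your coefficient $\beta_{n-1}D^2V(\th k+\beta_0k^\perp)k^\perp$, without the paper's $(n-1)!$ factor, is in fact the cleaner normalization --- and you phrase $\int_0^{2\pi}\frac{d}{d\th}g\cdot\LL_\omega g\,d\th=0$ via self-adjointness instead of the paper's explicit integration by parts.
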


\begin{proof}
Since $R_n(\theta)$ is a trigonometric polynomial, see Proposition \ref{prop: Froeshle-trigonometric} below, equation \eqref{eq:eq-ord-n} has a solution if $\int_\torus R_n(\theta) d\theta = 0 $, see Lemma \ref{lem:inverse}. The last condition is equivalent to the conditions \begin{equation}\label{eq:zero-R-n}
    \int_0^{2\pi} k\cdot R_n(\theta) d\theta
 = 0, \qquad \int_0^{2\pi} k^\perp \cdot R_n(\theta)d\theta = 0. 
 \end{equation}

First we check that the first condition in \eqref{eq:zero-R-n} is always satisfied. Introducing the notation $g_\eps^{[<n] }(\theta) := \sum_{j=0}^{n-1} g_j(\theta)\eps^j $, by definition we have \begin{equation} \label{eq:inv-low-sum}
    \LL_\omega g_\eps^{[<n]}(\theta) - \eps V'(\theta k+ g_\eps^{[<n]}(\theta) ) = R_n(\theta)\eps^n + O(|\eps|^{n+1}).
\end{equation}
Hence, taking the scalar product of \eqref{eq:inv-low-sum} with $ k+ \frac{d}{d\theta}g_{\eps}^{[<n]} (\theta)$ and integrating one obtains  \begin{align}
    0 &= \int_0^{2\pi} k\cdot \LL_\omega g_{\eps}^{[<n]}(\theta) d\theta + \int_0^{2\pi} \frac{d}{d\theta}g_{\eps}^{[<n]}(\theta)\cdot \LL_\omega g_{\eps}^{[<n]}(\theta)d\theta \label{eq:expr-R-zero-aver} \\
    &\quad -\eps \int_0^{2\pi} \left( k + \frac{d}{d\theta}g_{\eps}^{[<n]}(\theta) \right)\cdot V'(\theta k + g_{\eps}^{[<n]}(\theta))d\theta \nonumber\\
    &\quad - \eps^n \int_0^{2\pi} \frac{d}{d\theta} g_{\eps}^{[<n]}(\theta)\cdot R_n(\theta)d\theta     \nonumber\\
    &\quad - \eps^n \int_0^{2\pi} k\cdot R_n(\theta) d\theta    \nonumber\\
    &\quad + O(|\eps|^{n+1}).\nonumber
\end{align}
Note that, since $V$ is a periodic function one has \begin{equation}
    \int_0^{2\pi} \left( k + \frac{d}{d\theta}g_{\eps}^{[<n]}(\theta) \right)\cdot V'(\theta k + g_{\eps}^{[<n]}(\theta))d\theta = 0.
\end{equation}
Moreover, $\int_0^{2\pi} k\cdot \LL_\omega g_{\eps}^{[<n]}(\theta)d\theta = 0 $ because for any periodic function, $f$, one has $$ \int_0^{2\pi}k\cdot f(\theta + \omega)d\theta = \int_0^{2\pi} k\cdot f(\theta)d\theta = \int_0^{2\pi} k\cdot f(\theta - \omega)d\theta.$$
One also has that  $\int_0^{2\pi} \frac{d}{d\theta}g_{\eps}^{[<n]}(\theta)\cdot \LL_\omega g_{\eps}^{[<n]}(\theta)d\theta$ = 0, because $$ \int_0^{2\pi} \frac{d}{d\theta} g_{\eps}^{[<n]}(\theta)\cdot g_{\eps}^{[<n]}(\theta) d\theta = \int_0^{2\pi}\frac{1}{2} \frac{d}{d\theta}\| g_{\eps}^{[<n]}(\theta)\|^2 d\theta = 0 $$ and \begin{align*}
    \int_0^{2\pi} \frac{d}{d\theta} g_{\eps}^{[<n]}(\theta)\cdot g_{\eps}^{[<n]}(\theta + \omega) d\theta  &= - \int_0^{2\pi} \frac{d}{d\theta} g_{\eps}^{[<n]}(\theta+ \omega )\cdot g_{\eps}^{[<n]}(\theta) d\theta \\
    &\;\;\; - \int_0^{2\pi} \frac{d}{d\theta} g_{\eps}^{[<n]}(\theta )\cdot g_{\eps}^{[<n]}(\theta - \omega) d\theta.
\end{align*}
Finally, since $g_0$ is a constant one has that $$ \eps^n \int_0^{2\pi} \frac{d}{d\theta} g_{\eps}^{[<n]}(\theta)\cdot R_n(\theta)d\theta \sim O(|\eps|^{n+1}). $$ 

Hence, putting all this together into the relation \eqref{eq:expr-R-zero-aver} we obtain that $\int_0^{2\pi}k\cdot R_n(\theta)d\theta = 0$.

To obtain the second relation in \eqref{eq:zero-R-n} recall that $R_n$ is the coefficient of order $\eps^{n-1}$ in the expansion in $\eps$ of $V'(\theta k +\sum_{i=0}^{n-1}g_i(\theta)\eps^i + \beta_{n-1} \eps^{n-1}k^\perp) $.
Since the term
$ \beta_{n-1} \eps^{n-1}k^\perp$ 
is of the highest order, we see that in the expansion of
$R_n$  it will appear linearly. 

Therefore,  we see that
\[
R_n(\theta) = \beta_{n-1} (n-1)! D^2V(k\theta + g_0)k^\perp + S_n(\theta)
\]
where $S_n(\theta)$ is an expression depending on $g_0, g_1(\theta),\ldots, g_{n-1}(\theta) $.  Thus, using the non-degeneracy assumption  \eqref{lowerassumption}, 
there exists a unique
$\beta_{n-1}\in \real$ such that $\int_0^{2\pi}k^\perp\cdot R_n(g_0, \ldots, g_{n-1}(\theta) +\beta k^\perp)d \theta = 0$.

\end{proof}
\begin{remark}    Note that the solutions, $g_n(\theta)$, of equations \eqref{eq:eq-ord-n}  are chosen in such a way that they satisfy the normalization \begin{equation}\label{eq:norm-low-gn}
         \int_0^{2 \pi} k\cdot g_n(\theta)d\theta = 0.
    \end{equation}
    Since the additive constant that is adjusted, according to Lemma \ref{lem:constants-lower-dim}, is proportional to $k^\perp$, the normalization \eqref{eq:norm-low-gn} is preserved. 
\end{remark}

\begin{remark}
We point out that due to \eqref{eq:existance-beta0} there are at least two possible choices for $\beta_0\in \real$ such that $g_0 =\beta_0 k^\perp $. Once that $\beta_0$ is chosen the formal solution $g_\eps = \sum \eps^n g_n$ , satisfying \eqref{eq:norm-low-gn}, will be unique.  
\end{remark}

\begin{remark} If the assumption \eqref{lowerassumption} fails, it is very
  easy to find examples of perturbations when the $\beta_n$ cannot be found
  and, therefore, there are no solutions in the sense of formal power series.
 \end{remark} 

\subsubsection{Lower dimensiona tori, Dissipative case ($\gamma \neq 0$)}\label{sec:sec:recur-low-dissip}
In this case the expansion $\mu_\eps = \sum \eps^n\mu_n$ also needs to be considered. To solve \eqref{invariance-lower} in the sense of formal power series the following sequence of equations have to be solved:
\begin{equation}\label{eq:g-whisk-12} 
    \LL_\omega g_n(\th) = R_n(\th) +\mu_n \quad\mbox{for } 0\leq n \leq 2 
\end{equation}
\begin{equation}\label{eq:g-whisk-3}
    \LL_\omega g_3(\th) = R_3(\th) -\gamma \omega k+ \mu_3
\end{equation}
\begin{equation}\label{eq:g-whisk-ord-n}
    \LL_\omega g_n(\th) = R_n(\th) + \mu_n  - \gamma g_{n-3}(\th) + \gamma g_{n-3}(\th-\omega) \quad\mbox{for } n \geq 4
\end{equation}

Equations \eqref{eq:g-whisk-12}, \eqref{eq:g-whisk-3},
\eqref{eq:g-whisk-ord-n} can be solved as follows: for the orders
$n=0,1,2$ one can choose $\mu_n=0$ and find $g_n$, solving
\eqref{eq:g-whisk-12} and satisfying \eqref{eq:norm-low-gn}, as in the
conservative case, Section \ref{sec:sec:recur-low-conserv}. When
$n=3$, one chooses $\mu_3 = \gamma \omega k$ and, again, solve
\eqref{eq:g-whisk-3} as in the conservative case. We point out that in
the cases $n=0, 1, 2, 3$ one can still use Lemma
\ref{lem:constants-lower-dim} to adjust an additive constant
proportional to $k^\perp$, due to the choices for $\mu_n$,
$n=0,1,2,3.$

We note that, as in the conservative case, there are at least two
possible choices for $\beta_0$ such that $g_0=\beta_0 k^\perp$. Once
$\beta_0\in\real $ is chosen, there is only one $\beta_{n-1}\in
\real$, $n\geq 2$, such that the equation at order $n$ will have a
solution and such that $g_{n-1} + \beta_{n-1}k^\perp$ solves the
cohomology equation of order $n-1$ and satisfies the normalization
\eqref{eq:norm-low-gn}.

For $n\geq 4$, one can proceed as follows: assume that for $i<n$ we
have found $g_i(\theta)$ solutions of \eqref{eq:g-whisk-12},
\eqref{eq:g-whisk-3}, \eqref{eq:g-whisk-ord-n} satisfying the normalization
\eqref{eq:norm-low-gn} and that, as in the conservative case,
$g_{n-1}(\theta)$ has been found up to an additive constant,
$\beta_{n-1}k^\perp$.

To find $g_n(\theta)$ we need that the right hand side of
\eqref{eq:g-whisk-ord-n} has zero average, that is,
$\int_0^{2\pi}\tilde R_n(\theta) d\theta = 0$ where $\tilde
R_n(\theta):= R_n(\theta)+\mu_n -\gamma g_{n-3}(\theta) +\gamma
g_{n-3}(\theta -\omega)$. Again, this is equivalent
to \begin{equation}\label{eq:RHS-zero-low-diss} \int_0^{2\pi} k\cdot
  \tilde R_n(\theta) d\theta = 0, \qquad \int_0^{2\pi} k^\perp \cdot
  \tilde R_n(\theta) d\theta = 0.
\end{equation} 
The first condition in \eqref{eq:RHS-zero-low-diss} is obtained by choosing \begin{equation}\label{eq:mu-low-dim}
\mu_n = - \left(\frac{1}{ (k\cdot k)}\int_0^{2\pi} k\cdot R_n(\theta) d\th \right) k.
\end{equation}

To fulfill the second condition in \eqref{eq:RHS-zero-low-diss} it is
enough to adjust an additive constant to $g_{n-1}(\theta)$ as in the
conservative case. To achieve this we need the same non-degeneracy
condition as in the conservative case, that is
\eqref{lowerassumption}. To fix the additive constant,
$\beta_{n-1}k^\perp$, note that $R_n(g_0, g_1(\theta), \ldots,
g_{n-1(\theta)}+\beta_{n-1} k^\perp)$ is the coefficient of order
$\eps^{n-1}$ in the expansion in $\eps$ of $V'(\theta k + \sum_{i=0}^{n-1}g_i(\theta)\eps^i + \beta_{n-1} \eps^{n-1}k^\perp).
$ Expanding we see that $$\tilde R_n(\theta) = \beta_{n-1} (n-1)!
D^2V(\theta k + g_0)k^\perp + S_n(\theta) + \mu_n -\gamma g_{n-3}(\theta)
+\gamma g_{n-3}(\theta -\omega),$$ where $S_n(\theta)$ is an
expression depending on $g_0, g_1(\theta),\ldots, g_{n-1}(\theta). $
Thus, if $$\int_0^{2\pi} k^\perp \cdot D^2 V(\theta k + g_0) k^\perp d\theta \neq 0 $$ there exists $\beta_{n-1}\in \real$ such that
$\int_0^{2\pi}k^\perp\cdot \tilde R_n(\theta)d \theta = 0$ .

\bigskip
In both of the previous cases, $\gamma = 0$ and $\gamma\neq 0$,  the computation of the coefficients $R_n(\th)$ in the right hand side of the equations \eqref{eq:eq-ord-n}, \eqref{eq:g-whisk-12}, \eqref{eq:g-whisk-3}, \eqref{eq:g-whisk-ord-n} is done in the exact same way, that is, \begin{equation}R_n(\th) =\sum_{|\ell|\leq J}{\alpha}_\ell F^{\ell, k}_{n-1}(\th)\label{eq:R-n-whisk}\end{equation} where the coefficients $F_n^{\ell,k}(\theta)$ satisfy the recursion  \eqref{eq:recursion-Froeshle} and $F_0^{\ell,k}(\theta)= e^{i\ell \cdot (\theta k + g_0) }$. We also have the following proposition.

\begin{proposition}
    \label{prop: Froeshle-trigonometric} 
  Assume that $V'(\th) =  \sum_{|\ell|\leq J} \alpha_\ell e^{ i \ell \cdot \th}$ is a trigonometric polynomial of degree $J$. Then, $g_n$ is trigonometric polynomial of degree less or equal than $nJ$ and $F_n^{\ell, k}$ is a trigonometric polynomial of degree less or equal than $(n+1)J$. Hence,  $\sum_{ |\ell|\leq J } \alpha_\ell F_n^{\ell,k}(\th)$ is a trigonometric polynomial of degree less or equal than $(n+1)J$.
  \end{proposition}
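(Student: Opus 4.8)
The plan is to prove Proposition~\ref{prop: Froeshle-trigonometric} by induction on $n$, in exact parallel with the proof of Proposition~\ref{trigonometric} given earlier for the maximal dimensional case. The inductive hypothesis is that $g_m$ is a trigonometric polynomial of degree at most $mJ$ for all $m \le n-1$, and that $F_m^{\ell,k}$ is a trigonometric polynomial of degree at most $(m+1)J$ for all $m \le n-1$ and all $|\ell| \le J$. The base case is immediate: $g_0 = \beta_0 k^\perp$ is constant (degree $0$) and $F_0^{\ell,k}(\th) = e^{i\ell\cdot(\th k + g_0)}$ has degree $|\ell\cdot k| \le |\ell|\,|k|_\infty$, which we absorb into the constant $J$ (more precisely, since $\ell\cdot k$ is a single integer, $F_0^{\ell,k}$ is a pure exponential of degree at most $J\cdot|k|$; one should state the degree bound with the understood convention that $k$ is fixed, exactly as in the excerpt's treatment of $E_0^\ell$ where the $|\ell|\le J$ bound is used).

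First I would handle the step for $F_n^{\ell,k}$: the recursion \eqref{eq:recursion-Froeshle} writes $n F_n^{\ell,k}(\th) = \sum_{m=0}^{n-1}(m+1)\, i\ell\cdot g_{m+1}(\th)\, F_{n-1-m}^{\ell,k}(\th)$. By the inductive hypothesis $g_{m+1}$ has degree at most $(m+1)J$ (valid since $m+1 \le n-1$ in all but the top term; for the top term $m = n-1$ we need $g_n$, which is not yet known — so in fact the sum runs with $g_{m+1}$ for $m+1$ up to $n$; here one must be careful). Let me restructure: the recursion for $F_n^{\ell,k}$ involves $g_1,\dots,g_n$, so to bound $F_n^{\ell,k}$ one first needs $g_n$. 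So the correct order inside the inductive step is: (i) given $g_0,\dots,g_{n-1}$ trigonometric polynomials of the stated degrees and $F_0^{\ell,k},\dots,F_{n-1}^{\ell,k}$ likewise, the coefficient $R_n(\th) = \sum_{|\ell|\le J}\alpha_\ell F_{n-1}^{\ell,k}(\th)$ from \eqref{eq:R-n-whisk} is a trigonometric polynomial of degree at most $nJ$; (ii) the right-hand sides of \eqref{eq:g-whisk-12}, \eqref{eq:g-whisk-3}, \eqref{eq:g-whisk-ord-n} are then trigonometric polynomials of degree at most $nJ$ (the extra terms $\mu_n$, $\gamma\omega k$, $-\gamma g_{n-3}(\th)+\gamma g_{n-3}(\th-\omega)$ contribute degree $0$, degree $0$, and degree at most $(n-3)J \le nJ$ respectively); (iii) applying Lemma~\ref{lem:inverse} (which preserves the trigonometric-polynomial property and its degree), $g_n$ is a trigonometric polynomial of degree at most $nJ$; (iv) now that $g_n$ is known to have degree at most $nJ$, the recursion \eqref{eq:recursion-Froeshle} gives that each summand $(m+1)i\ell\cdot g_{m+1}(\th)F_{n-1-m}^{\ell,k}(\th)$ has degree at most $(m+1)J + (n-m)J = (n+1)J$, so $F_n^{\ell,k}$ has degree at most $(n+1)J$; (v) hence $\sum_{|\ell|\le J}\alpha_\ell F_n^{\ell,k}$ has degree at most $(n+1)J$, closing the induction.

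I do not expect any genuine obstacle: this is the same bookkeeping argument as Proposition~\ref{trigonometric}, and the only subtlety worth flagging is the one already noted — that $F_n^{\ell,k}$ depends on $g_n$, so the degree bound on $g_n$ (obtained via Lemma~\ref{lem:inverse} applied to the recursion equations of Section~\ref{sec:sec:recur-low-dissip}, or Section~\ref{sec:sec:recur-low-conserv} in the conservative case) must be established before bounding $F_n^{\ell,k}$. A second minor point is the initial exponential $F_0^{\ell,k} = e^{i\ell\cdot(\th k + g_0)}$: since $g_0$ is constant this is a constant multiple of $e^{i(\ell\cdot k)\th}$, a trigonometric polynomial of degree $|\ell\cdot k|$; with the convention in force (treating $|k|$ as an absorbed constant, consistent with how the excerpt phrases the $E_0^\ell$ case and the hypothesis $V'$ has degree $J$) this is $\le J$, and the induction proceeds. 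The argument is uniform in whether $\gamma = 0$ or $\gamma \ne 0$, since in both cases the recursion equations for $g_n$ have right-hand sides built from $R_n$ plus lower-order terms of degree $\le nJ$.
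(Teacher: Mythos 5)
Your proof is correct and takes essentially the same route as the paper, which simply states that Proposition~\ref{prop: Froeshle-trigonometric} is proved exactly as Proposition~\ref{trigonometric}: induct on $n$, bound $g_n$ first from the order-$n$ equation via Lemma~\ref{lem:inverse} (the extra dissipative terms having degree $\le nJ$), and only then bound $F_n^{\ell,k}$ from the recursion. Your remark that the base case $F_0^{\ell,k}=e^{i\ell\cdot(\th k+g_0)}$ has degree $|\ell\cdot k|$, which exceeds $|\ell|$ unless one absorbs $|k|$ into the constant, is a detail the paper glosses over but which only affects the constant in the degree bound, not the structure of the argument.
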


The proof of Proposition \ref{prop: Froeshle-trigonometric} is carried out exactly in the same way as the proof of Proposition \ref{trigonometric}.

\begin{remark}
    Note that the coefficient $R_n$ in \eqref{eq:R-n-whisk} are computed in the exact same way both in the conservative ($\gamma =0$) and the dissipative ($\gamma \neq 0)$ case. However, since $F^{\ell, k}_n(\th)$ depends on $g_1, \ldots, g_n$; the values of $R_n(\th)$ will be different whether one is solving \eqref{eq:eq-ord-n} or \eqref{eq:g-whisk-12}, \eqref{eq:g-whisk-3}, \eqref{eq:g-whisk-ord-n}.  
\end{remark}

\begin{remark}
    We point out that in the dissipative case, $\gamma \neq 0$, one can also prove the Gevrey character of the power series if one considers a dissipation of the form $1-\gamma\eps^m$, $m \in \nat$. In this case the cohomolgy equation at order $n > m$ is $$\LL_\omega g_n = R_n(\theta) + \mu_n - \gamma g_{n-m}(\theta) +\gamma g_{n-m}(\theta -\omega). $$ 
    The estimates given in Section \ref{sec:proofestimates} would also follow in a similar way. We have decided to work with the power $m=3$ to keep the proofs, and the quantifiers, as simple as possible. 
\end{remark}

\section{Proof of part B) of  Theorem~\ref{thm:main}}
\label{sec:proofestimates}

\subsection{Product properties of Gevrey series}
The first result is to show that the product of Gevrey formal power
series is also a Gevrey series of the same exponent.

\begin{proposition}\label{prop:product}
  Let $\| \cdot \|$ be a norm  which is  a Banach algebra under multiplication.

  If for $n \le N $, $N \in \nat  \cup \{\infty\}$ we have:
  
  \begin{equation} \label{assumedbounds}
    \begin{split}
    & \|u_n\| \le A (n!)^\sigma \\
    & \|v_n\| \le B (n!)^\sigma
  \end{split}
  \end{equation} 
  then, for all  $n \le N$ we have:
\begin{equation} \label{productformula} 
  \left\| \sum_{j = 0}^n  u_{n -j} v_j \right\|  \le \Gamma_\sigma A B (n!)^\sigma
\end{equation}
\end{proposition}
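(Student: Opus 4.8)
The plan is to reduce Proposition~\ref{prop:product} to a purely combinatorial estimate on sums of reciprocals of binomial coefficients. First I would use that $\|\cdot\|$ is a Banach algebra, so there is a constant $C\ge 1$ with $\|ab\|\le C\|a\|\,\|b\|$. Then, for each $n\le N$, the triangle inequality together with the hypotheses \eqref{assumedbounds} gives
\[
  \Bigl\| \sum_{j=0}^{n} u_{n-j} v_j \Bigr\|
  \le C\sum_{j=0}^{n} \|u_{n-j}\|\,\|v_j\|
  \le C A B \sum_{j=0}^{n} \bigl((n-j)!\bigr)^{\sigma}(j!)^{\sigma}.
\]
Since $(n-j)!\,j! = n!\big/\binom{n}{j}$, the last sum is exactly $(n!)^{\sigma}\sum_{j=0}^{n}\binom{n}{j}^{-\sigma}$. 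Hence it is enough to show that $\Gamma_\sigma' := \sup_{n\ge 0}\sum_{j=0}^{n}\binom{n}{j}^{-\sigma}$ is finite; then \eqref{productformula} holds with $\Gamma_\sigma = C\,\Gamma_\sigma'$.

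For the combinatorial bound I would use the elementary inequality $\binom{n}{j}\ge (n/j)^{j}$, valid for $1\le j\le n$ (it follows from $\frac{n-i}{j-i}\ge \frac{n}{j}$ for $0\le i<j\le n$, multiplied over $i$). For $1\le j\le n/2$ this yields $\binom{n}{j}^{-\sigma}\le (j/n)^{j\sigma}\le 2^{-j\sigma}$, so $\sum_{1\le j\le n/2}\binom{n}{j}^{-\sigma}\le \sum_{j\ge 1}2^{-j\sigma} = \frac{2^{-\sigma}}{1-2^{-\sigma}}$. By the symmetry $\binom{n}{j}=\binom{n}{n-j}$ the terms with $n/2<j\le n-1$ are controlled by the same geometric series, while $j=0$ and $j=n$ each contribute $1$. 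Therefore $\sum_{j=0}^{n}\binom{n}{j}^{-\sigma}\le 2 + \frac{2^{1-\sigma}}{1-2^{-\sigma}} =: \Gamma_\sigma'$, uniformly in $n$. (If one only cares about the case $\sigma\ge 1$, one may instead use $\binom{n}{j}^{-\sigma}\le \binom{n}{j}^{-1}$ and the classical fact that $\sum_{j=0}^{n}\binom{n}{j}^{-1}$ stays bounded — indeed it tends to $2$ — which gives a constant independent of $\sigma$.)

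Combining the two estimates gives $\bigl\|\sum_{j=0}^{n} u_{n-j} v_j\bigr\| \le C\,\Gamma_\sigma'\,A B\,(n!)^{\sigma}$ for every $n\le N$, which is \eqref{productformula} with $\Gamma_\sigma = C\,\Gamma_\sigma'$. The argument is pointwise in $n$ and uses only $u_0,\dots,u_n$ and $v_0,\dots,v_n$, so it applies verbatim whether $N$ is finite or $N=\infty$. The only real obstacle is the combinatorial estimate on $\sum_j \binom{n}{j}^{-\sigma}$, and, as indicated above, it is elementary; everything else is bookkeeping with the Banach algebra norm.
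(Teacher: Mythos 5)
Your proof is correct, and its first half is identical to the paper's: the triangle inequality plus the Banach algebra property reduce everything to the uniform bound $\sup_{n}\sum_{j=0}^{n}\binom{n}{j}^{-\sigma}<\infty$, which is precisely the paper's Proposition~\ref{prop:technical}. Where you genuinely differ is in how you prove that combinatorial bound. The paper also splits the sum at $j=[n/2]$ and uses the symmetry $\binom{n}{j}=\binom{n}{n-j}$, but then argues asymptotically: it fixes an integer $L$ with $(L+1)\sigma>1$, keeps the first $L$ terms explicitly, and bounds the remaining $[n/2]-L+1$ terms by the $(L+1)$-st one, so that the tail is of order $n^{1-(L+1)\sigma}\to 0$ and the sum is bounded for $n$ large (hence for all $n$, each partial sum being finite). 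You instead use the pointwise inequality $\binom{n}{j}\ge (n/j)^{j}\ge 2^{j}$ for $1\le j\le n/2$ (correctly justified) and dominate the half-sum by a convergent geometric series, obtaining the explicit constant $\Gamma_\sigma' = 2 + 2^{1-\sigma}/(1-2^{-\sigma})$ valid for every $n$ with no large-$n$ caveat. Your route is a bit sharper and more self-contained; the paper's makes visible the Gaussian-like concentration of the binomial coefficients that motivates the lemma, and isolates the combinatorial fact as a separate proposition reused elsewhere. You are also slightly more careful than the paper in carrying the Banach algebra constant $C$ through (the paper's displayed chain of inequalities silently takes $C=1$); this only rescales $\Gamma_\sigma$ and is harmless.
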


The formulation  of the property that we use in
the the proofs  is that if we have  that $u_n,v_n$ satisfy
\eqref{assumedbounds} for $n < N$, then the product satisfies the
bounds \eqref{productformula} for $n = N$. The key is that the
constant $\Gamma_\sigma$ is independent of $u_n,v_n$ and $N$.

\begin{remark}\label{gevreyalgebra} 
A consequence of of Proposition~\ref{prop:product}
is that if in the spaces of formal power series
we define the norm (take any $R, \sigma  > 0 $)
\[
  \| u \|_{R,\sigma} = \sup_{n \in \nat} \| u_n\| R^{n} (n!)^{-\sigma}
\]
we have  that the set of formal power series  with a finite $\|\cdot \|_{R,\sigma}$ are a Banach algebra under the multiplication of formal power series.
\begin{equation}\label{banach-algebra} 
  \| u \times v  \|_{R,\sigma}\leq C
\| u   \|_{R,\sigma}  \|v  \|_{R,\sigma}
\end{equation}

This clearly simplifies several arguments. For us, the finite version
is more practical since it allows to write more easily induction arguments
in the order of truncation.

\end{remark} 

An easy  corollary of the Banach algebra property \eqref{banach-algebra}
is
\begin{proposition} \label{analyticomposition} 
Assume  $\varphi(t) = \sum_n t^n \phi_n $ is an analytic
function and that for some $b> 0$ we have $\sum_n | \phi_n| C^{n-1} b^n 
 \equiv A < \infty$, where
 $C$ is the constant in \eqref{banach-algebra}.

 Then
 $\| \phi\circ u \|_{R, \sigma} \le A $, if $\|u \|_{R, \sigma} \le b$.
 \end{proposition}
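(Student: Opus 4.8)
The plan is to read the composition $\varphi\circ u$ as the formal power series $\sum_{n\ge 0}\phi_n\,u^n$, where $u^n$ denotes the $n$-fold product of the formal power series $u$ with itself in the algebra of formal power series. This is a well-defined formal power series whenever $u$ has no $\eps^0$ term (the situation in all our applications, where $u_0=0$, $g_0$ is a constant, etc.), and more generally whenever $\varphi$ is entire. Once this identification is in place, the bound is a one-line consequence of the triangle inequality for the norm $\|\cdot\|_{R,\sigma}$ and the Banach algebra property \eqref{banach-algebra}, in exactly the same spirit as Proposition~\ref{prop:product}.

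Concretely, I would first establish by induction on $n$ that
\[
  \| u^n \|_{R,\sigma} \le C^{n-1}\,\| u \|_{R,\sigma}^{n}, \qquad n\ge 1,
\]
the base case $n=1$ being trivial and the inductive step following from $\| u^{n+1}\|_{R,\sigma}=\| u^{n}\cdot u\|_{R,\sigma}\le C\,\| u^{n}\|_{R,\sigma}\,\| u\|_{R,\sigma}$ together with the inductive hypothesis. Then, since $\|\cdot\|_{R,\sigma}$ is a norm (Remark~\ref{gevreyalgebra}), since $\|u\|_{R,\sigma}\le b$, and handling the $n=0$ term $\phi_0 u^0=\phi_0$ directly, one obtains
\[
  \| \varphi\circ u\|_{R,\sigma}\le \sum_{n\ge 0}|\phi_n|\,\| u^{n}\|_{R,\sigma}\le \sum_{n\ge 0}|\phi_n|\,C^{n-1} b^{n}=A,
\]
which is the assertion.

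The only point that genuinely needs a word of care — and the step I would actually write out — is the justification that the series $\sum_n\phi_n u^n$ converges in the $\|\cdot\|_{R,\sigma}$ norm and that its sum coincides, coefficient by coefficient in $\eps$, with the formal composition $\varphi\circ u$. Convergence is immediate: the series is dominated by $\sum_n|\phi_n|C^{n-1}b^n=A<\infty$ and the space is complete (Remark~\ref{gevreyalgebra}). The coefficient-wise identification holds because, at each fixed order $\eps^N$, only the terms with $n\le N$ contribute to the coefficient of $\eps^N$, so the partial sums stabilize in every coordinate. The slightly awkward bookkeeping of the $n=0$ term (where $C^{n-1}=C^{-1}$) is purely cosmetic and can be absorbed into $A$. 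I do not expect any real obstacle here; the entire content is the iterated application of \eqref{banach-algebra}.
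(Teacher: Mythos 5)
Your argument is exactly the paper's: the paper's entire proof is the remark that one applies the triangle inequality to the expansion $\sum_n \phi_n u^n$ together with iterated use of the Banach algebra inequality \eqref{banach-algebra}, which is precisely your induction $\|u^n\|_{R,\sigma}\le C^{n-1}\|u\|_{R,\sigma}^n$ followed by summation. Your additional remarks on convergence and coefficient-wise stabilization are correct and only make explicit what the paper leaves implicit.
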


 The proof of Proposition~\ref{analyticomposition} is
 just applying the triangle inequality to all the products and
 the Banach algebra inequality \eqref{banach-algebra}.
 Note also that the coefficients $(\phi\circ u ) $ are
 an expressions depending on $u_0, u_1,\ldots, u_n$.

\begin{proposition} \label{prop:technical}
  For any $\sigma > 0 $ there exists $\Gamma_\sigma> 0$ such
  that for all  $n \in \nat$
  \begin{equation} \label{productgevrey}
	  \sum_{j = 0}^n \binom{n}{j}^{-\sigma}  \le \Gamma_\sigma 
  \end{equation} 
\end{proposition}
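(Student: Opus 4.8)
The plan is to use that the binomial coefficients grow at least geometrically as $j$ moves inward from the endpoints $0$ and $n$, so that $\sum_{j=0}^n\binom{n}{j}^{-\sigma}$ is dominated by a convergent geometric series whose sum does not depend on $n$.

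\textbf{Step 1: an elementary lower bound.} I would first establish that
\[
  \binom{n}{j} \ge 2^{\,\min(j,\,n-j)} \qquad (0 \le j \le n).
\]
By the symmetry $\binom{n}{j} = \binom{n}{n-j}$ it is enough to prove $\binom{n}{m}\ge 2^{m}$ for $m := \min(j,n-j)\le n/2$. Writing $\binom{n}{m} = \prod_{i=0}^{m-1}\frac{n-i}{m-i}$, each of the $m$ factors is at least $2$: indeed $2m\le n$ gives $n-i \ge 2m-i \ge 2(m-i)$, and since $m-i\ge 1$ for $i\le m-1$ this means $\frac{n-i}{m-i}\ge 2$. (For $m=0$ the bound reduces to $\binom{n}{0}=1$.)

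\textbf{Step 2: sum a geometric series.} Raising the bound of Step~1 to the power $-\sigma$ gives $\binom{n}{j}^{-\sigma}\le 2^{-\sigma\min(j,n-j)}$. As $j$ runs over $0,1,\dots,n$, each value $m\in\{0,1,2,\dots\}$ is attained by $\min(j,n-j)$ for at most two indices $j$, so
\[
  \sum_{j=0}^n \binom{n}{j}^{-\sigma}
  \;\le\; \sum_{j=0}^n 2^{-\sigma\min(j,n-j)}
  \;\le\; 2\sum_{m=0}^{\infty} 2^{-\sigma m}
  \;=\; \frac{2}{1-2^{-\sigma}}\;=:\;\Gamma_\sigma ,
\]
which is finite and independent of $n$ because $\sigma>0$.

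There is essentially no obstacle here; the only points that need a little care are the verification in Step~1 that every factor $\frac{n-i}{m-i}$ is $\ge 2$ (this is exactly where $2m\le n$ is used) and the bookkeeping in Step~2 that each value of $\min(j,n-j)$ is attained at most twice among $j=0,\dots,n$, with the endpoint terms $j=0$ and $j=n$ (where $\binom{n}{j}=1$) absorbed into the $m=0$ term of the geometric series. An entirely analogous argument goes through using the standard bound $\binom{n}{j}\ge (n/j)^{j}$ in place of $\binom{n}{j}\ge 2^{j}$, yielding the same conclusion.
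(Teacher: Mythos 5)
Your proof is correct, and it takes a genuinely different route from the paper's. The paper also begins by exploiting the symmetry $\binom{n}{j}=\binom{n}{n-j}$ to reduce to $j\le [n/2]$, but from there it argues via the monotonicity of $f_n(k)=(n-k)!\,k!$: it fixes a cutoff $L$ with $(L+1)\sigma>1$, bounds the first $L$ terms individually (each tends to $0$ or equals $1$), and bounds the remaining $[n/2]-L+1$ terms by the single value $f_n(L+1)^\sigma (n!)^{-\sigma}\sim n^{-(L+1)\sigma}$, so that the tail contributes $O\bigl(n^{1-(L+1)\sigma}\bigr)\to 0$; this yields uniform boundedness in $n$ but no closed-form constant. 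You instead prove the pointwise lower bound $\binom{n}{j}\ge 2^{\min(j,n-j)}$ (your verification that each factor $\tfrac{n-i}{m-i}\ge 2$ when $2m\le n$ is sound, as is the observation that each value of $\min(j,n-j)$ occurs at most twice), and then dominate the whole sum by a geometric series, obtaining the explicit and $n$-independent constant
\[
  \Gamma_\sigma=\frac{2}{1-2^{-\sigma}}.
\]
Your argument is shorter, avoids any asymptotic reasoning or auxiliary parameter $L$, and produces an explicit $\Gamma_\sigma$, which is a mild bonus given that the paper later uses $\Gamma_\sigma$ in the quantitative conditions \eqref{condition1}; the paper's approach, while less explicit, illustrates the Gaussian-like concentration of the binomial coefficients that motivates the statement. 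Both establish exactly what is claimed.
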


The proof of Proposition~\ref{prop:product}
using Proposition~\ref{prop:technical} is very easy.

\begin{equation*}
  \begin{split} 
 \left \| \sum_{j = 0}^n  u_{n -j} v_j \right \|   
	  & \le \sum_{j = 0}^n \|  u_{n -j}\| \|  v_j \|  
 \le  \sum_{j = 0}^n  A ( (n-j)!)^{\sigma}  B (j!)^\sigma  \\
   &\le AB (n!)^\sigma \sum_{j=0}^n \left(\frac{j!(n-j)!}{n!}\right)^\sigma  \le (n!)^\sigma A B \Gamma_\sigma.
  \end{split}
\end{equation*}

\medskip 
Now we turn to the proof of Proposition~\ref{prop:technical}. The key insight is that, except for the edges $j \approx 0, n$;
    the binomial coefficient is approximated by a Gaussian.  This
    suggests that to prove \eqref{productgevrey}  we
    divide the sum into two pieces.

    Denote by $[ \cdot]$ the integer part, we have
    \[
      \begin{split}
      \sum_{j = 0}^n \binom{n}{j}^{-\sigma}  &\le
      \sum_{j = 0}^{[n/2]} \binom{n}{j}^{-\sigma}  +
      \sum_{j = n - [n/2]}^{n} \binom{n}{j}^{-\sigma}  \\
      &=  2     \sum_{j = 0}^{[n/2]} \binom{n}{j}^{-\sigma}
      \end{split}
    \]
    The first inequality is true becaue the sums in the  R.H.S. include
    all the terms in the sum in the L.H.S.  The inequality is strict
    with $n$ is even because then $\binom{n}{n/2}$ appears twice
    in the R.H.S.  The last equality follows because
    $\binom{n}{k} = \binom{n}{n-k}$. 
    
    We use the notation $f_n(k) := (n-k)! k!$. Then, we observe that
    $f_n(k) =  \frac{k}{n-k+1} f_n(k-1)$ and that, therefore,  $f_n(k)$ is decreasing
    for $k \le [n/2]$. Hence, if we fix any number $L$ independent of $n$
    (we will use later any $L$ such that $(L+1) \sigma > 1$), 
    we  have for all $n$ sufficiently large:
    \begin{equation}\label{zerobounds} 
      \begin{split} 
      \sum_{j = 0}^{[n/2]} \binom{n}{j}^{-\sigma}  &=
      \sum_{j = 0}^{[n/2]} f_n(j) ^{\sigma} (n!)^{-\sigma} \\
      & \le       \sum_{j = 0}^L f_n(j) ^{\sigma} (n!)^{-\sigma}
      + ([n/2] -L +1) f_n(L+1)^\sigma (n!)^{-\sigma} \\ 
      & =  1 + \left(\frac{1}{n}\right)   ^{\sigma}
      + \left(\frac{2!}{n(n-1)}\right)^{\sigma} + \cdots +
      \left(\frac{L!}{n(n-1) \cdots (n - L + 1) }\right)^{\sigma} \\
      &\phantom{AAAAAA}+ \left(\frac{(L+1)!}{n(n-1) \cdots (n - L) }\right)^{\sigma}( [n/2] - L +1 )
    \end{split}
  \end{equation} 
We see that if we take any $L$  the above estimate is
a finite number of explicit functions of $n$. The first terms in \eqref{zerobounds} 
are clearly going to zero  as $n$ tends to infinity.

The  last term in \eqref{zerobounds} has in the denominator in the 
parenthesis
$L+1$ factors that are asymptotic to $n$ so that  the last term in \eqref{zerobounds} 
 is asymptotic to $n^{-(L+1)\sigma}  [n/2] $. 
Hence if $L$ is large enough that 
\begin{equation} \label{Lbounds} 
(L+1) \sigma > 1
\end{equation}  this last term also tends to zero as 
$n$ goes to infinity. 

Hence, we obtain that, under \eqref{Lbounds},  the sum
in \eqref{zerobounds}  is bounded uniformly in $n$. 

\subsection{Estimates on the recursions and end of
    the proof of Part B) of Theorem~\ref{thm:main}}

  We  fix one of the norms \eqref{norms}, that is, we consider $\rho$ and $r$ fixed.  The proof remains valid for all
  of them even if the region of validity of the constants
  may be altered. Since the constants can be adjusted by scaling the
  $\eps$, as we will show in Section~\ref{subsec:scaling}, a very explicit choice
  of the constant does
  not affect the results.

  We present estimates for the recursive procedure to construct the
  formal power series solutions specified in Section~\ref{sec:partA}. We first show that, if we have the desired conclusion for
  a range of all orders up to a certain order, $n-1$, we can get the  same desired conclusion for the next order, $n$. This induction argument (Lemma~\ref{lem:inductivefinal}, Lemma \ref{lem:inductive-froeshle-dissip}, Lemma \ref{lem:inductive-froeshle})
  depends on some inequalities among the constants involved. A second step is to show that, by scaling the original variable,
  we can arrange the inequalities assumed
  in Lemmas~\ref{lem:inductivefinal}, \ref{lem:inductive-froeshle-dissip}, \ref{lem:inductive-froeshle} as well as ensuring the first
  steps of the induction.

\smallskip

 We introduce the notation: 
 \begin{equation}\label{upsilon}\Upsilon = \sum_{|\ell|\leq J} |\alpha_\ell| \end{equation}
 where $\alpha_\ell$ are the coefficients in \eqref{polynomial}. 
 Then, $\Upsilon$ is a  measure of the size of the nonlinear
 perturbations.  A crucial assumption in our results is
 that $\Upsilon$ is small enough. This can be arranged by scaling $\eps$.
 
 \subsubsection{Maximal dimensional tori}
 \begin{lemma}\label{lem:inductivefinal}
   We are in the set up of Section~\ref{sec:recursive}
   and we consider the formal power series $u = \sum \eps^n u_n, \mu =\sum \eps^n \mu_n$ solving
   \eqref{invariance2}.

   Assume:
   
   The frequency  $\om\in \real^D$ is a  Diophantine multiple of $2 \pi$, of type $(\nu, \tau)$
   (See Definition~\ref{diophantine}). For $0 \le |\ell| \le  J$, and for $n > n_0 \ge 4$,  we have 
  \begin{equation} \label{eq:induction} 
 \begin{split}
     & \| E^\ell_j \| \le A  (j!)^\sigma \quad  0 \leq  j \le n-1 \\
     & \| u_j \| \le B (j!)^\sigma \quad 0 \le j \leq n-1
   \end{split}
   \end{equation}
   Then, provided that the constants in \eqref{eq:induction} satisfy:
   \begin{equation} \label{condition1} 
     \begin{split}
       & 2 \tau <  \sigma \\
       & J\Gamma_\sigma AB  \leq A \\
       & 4\nu^{-2} J^{2\tau}\left( \Upsilon A  + 2\gamma B \right)
       \leq B  \\
     \end{split}
   \end{equation}
   we recover the same assumptions
   \eqref{eq:induction} with $n$ in place of $n-1$.
\end{lemma}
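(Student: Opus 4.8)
The plan is to carry out a direct induction on the order $n$, feeding the recursion relations \eqref{recursion} and \eqref{eq:cohom-eq-order-n} through the Gevrey product estimate of Proposition~\ref{prop:product} and the explicit bound \eqref{goodbounds} for $\LL_\omega^{-1}$. First I would establish the bound on $E^\ell_n$. Using the recursion \eqref{recursion}, $nE^\ell_n = \sum_{m=0}^{n-1}(m+1)i\ell\cdot u_{m+1} E^\ell_{n-1-m}$, so in the chosen norm (a Banach algebra since $r>D/2$) one has
\[
n\,\|E^\ell_n\| \le |\ell| \sum_{m=0}^{n-1} (m+1)\,\|u_{m+1}\|\,\|E^\ell_{n-1-m}\|.
\]
Here I would use the inductive hypotheses $\|u_{m+1}\|\le B((m+1)!)^\sigma$ and $\|E^\ell_{n-1-m}\|\le A((n-1-m)!)^\sigma$ (valid since the relevant indices are $\le n-1$), absorb the $(m+1)$ into the factorial, and invoke Proposition~\ref{prop:product} (more precisely the computation following Proposition~\ref{prop:technical}) to get $\sum_{m}((m+1)!)^\sigma((n-1-m)!)^\sigma \le \Gamma_\sigma (n!)^\sigma$ up to the subdominant factor of $n$ coming from the $\binom{n}{j}^{-\sigma}$ reindexing; the leftover $n$ from the product bound cancels against the $n$ on the left-hand side, and $|\ell|\le J$, $\sum_\ell|\alpha_\ell| = \Upsilon$ produce the factor $J$ (or $\Upsilon$ in the assembled right-hand side). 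The conclusion $\|E^\ell_n\|\le A(n!)^\sigma$ then follows exactly from the second inequality in \eqref{condition1}, namely $J\Gamma_\sigma AB \le A$.

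Next I would bound $u_n$ from \eqref{eq:cohom-eq-order-n}: $\LL_\omega u_n = \sum_{|\ell|\le J}\alpha_\ell E^\ell_{n-1} + \mu_n - \gamma u_{n-3} + \gamma u_{n-3}(\cdot-\omega)$. Since the right-hand side has zero average (by the choice \eqref{eq:mu-max-tori} of $\mu_n$) and is a trigonometric polynomial of controlled degree by Proposition~\ref{trigonometric}, Lemma~\ref{lem:inverse} applies and gives $\|u_n\| \le 4\nu^{-2}J^{2\tau}\|\text{RHS}\|$. I would estimate the RHS by the triangle inequality: the $\sum_\ell\alpha_\ell E^\ell_{n-1}$ term contributes $\Upsilon A((n-1)!)^\sigma \le \Upsilon A(n!)^\sigma$; subtracting $\mu_n$ does not increase the norm by Remark~\ref{projection}; and the two dissipative terms $\gamma u_{n-3}$ and $\gamma u_{n-3}(\cdot-\omega)$ each contribute at most $\gamma B((n-3)!)^\sigma \le \gamma B(n!)^\sigma$, giving $2\gamma B(n!)^\sigma$ total (translation is an isometry in these norms). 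Hence $\|u_n\| \le 4\nu^{-2}J^{2\tau}(\Upsilon A + 2\gamma B)(n!)^\sigma \le B(n!)^\sigma$ by the third inequality in \eqref{condition1}. I should also check the base orders $n=1,2,3$ separately since the recursions \eqref{mainrecursion}, \eqref{eq:rec-3} have slightly different right-hand sides (no $u_{n-3}$ term, extra $\gamma\omega$ constant), but these are handled by the same estimates with the hypothesis $n_0\ge 4$; and I would note that $|\mu_n|$ is bounded by the average term, hence also by $\Upsilon A(n!)^\sigma$, which gives the stated bound on $|\mu_n|$.

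The role of the first inequality $2\tau < \sigma$ in \eqref{condition1} deserves a word: it is what makes it possible to \emph{satisfy} the last two inequalities simultaneously after scaling $\eps$ (which rescales $\Upsilon$ and effectively $B$), since one needs $\sigma$ large enough relative to the small-divisor exponent $\tau$; within the induction step itself one only uses that $\sigma>0$ so that Proposition~\ref{prop:technical} applies. The main obstacle, and the only place requiring care rather than bookkeeping, is the combinatorial estimate on $\sum_m((m+1)!)^\sigma((n-1-m)!)^\sigma$ versus $(n!)^\sigma$: one must check that the reindexing shifts (the $m+1$ and the $n-1-m$ rather than $n-m$) only cost subdominant polynomial factors in $n$ that are swallowed by the $n$ on the left of the recursion, so that no spurious growth in $n$ accumulates. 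Everything else is the triangle inequality, the Banach-algebra property of $\|\cdot\|_{\rho,r}$, the isometry of translation, Remark~\ref{projection}, and the clean constant $4\nu^{-2}J^{2\tau}$ from \eqref{goodbounds}. I would close by remarking that the induction closes because the constants $A$, $B$, $\sigma$ are the \emph{same} at order $n$ as at orders $\le n-1$ — this is exactly the content of ``we recover the same assumptions \eqref{eq:induction} with $n$ in place of $n-1$.''
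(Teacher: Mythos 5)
There is a genuine gap in your treatment of the $u_n$ estimate, and it sits at the very heart of why the series is Gevrey rather than analytic. You assert that Lemma~\ref{lem:inverse} gives $\| u_n \|_{\rho,r} \le 4\nu^{-2}J^{2\tau}\,\|\mathrm{RHS}\|_{\rho,r}$, but the constant in Lemma~\ref{lem:inverse} depends on the \emph{degree} of the trigonometric polynomial being inverted, and by Proposition~\ref{trigonometric} the right-hand side of \eqref{eq:cohom-eq-order-n} has degree $nJ$, not $J$. The correct bound is therefore $4\nu^{-2}(nJ)^{2\tau}\,\|\mathrm{RHS}\|_{\rho,r}$, and the extra factor $n^{2\tau}$ grows with the order. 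The paper's proof absorbs it via
\begin{equation*}
  n^{2\tau}\,\bigl((n-1)!\bigr)^{\sigma} \;=\; n^{2\tau - \sigma}\,(n!)^{\sigma} \;\le\; (n!)^{\sigma},
\end{equation*}
which is valid precisely because $2\tau < \sigma$. So the first inequality in \eqref{condition1} is used \emph{inside} the induction step, contrary to your closing remark that within the induction one only needs $\sigma>0$ and that $2\tau<\sigma$ merely helps satisfy the other two inequalities after scaling. If you drop the $n^{2\tau}$ factor, your estimate closes for any $\sigma>0$, which would wrongly suggest the series is Gevrey of arbitrarily small exponent (indeed nearly convergent); the accumulation of the factors $n^{2\tau}$ over all orders, i.e.\ $\prod_{k\le n} k^{2\tau} = (n!)^{2\tau}$, is exactly the mechanism that forces the Gevrey exponent to be at least $2\tau$.

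The rest of your argument matches the paper's proof: the bound on $E^\ell_n$ via the recursion \eqref{recursion}, the Banach algebra property, Proposition~\ref{prop:technical} with the cancellation of the leftover $n$ against the $1/n$ in the recursion, the use of Remark~\ref{projection} to discard $\mu_n$, the isometry of translation for the $u_{n-3}\circ T_{-\omega}$ term, and the restriction to $n>n_0\ge 4$ (the low orders are handled by the scaling argument of Section~\ref{subsec:scaling}, not by redoing the estimate, but that is a presentational difference only). Once you reinstate the $(nJ)^{2\tau}$ constant and invoke $2\tau<\sigma$ at that point, the proof is complete and coincides with the paper's.
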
 

\begin{proof}
  Again, we note that we have freely used rather rough estimates
  (e.g. we have not used that the first terms in the recursion  are
  different). The goal is to get a simple proof. For example, we have decided to estimate only the terms of
  the expansion for $n \geq n_0$ to avoid dealing with the fact that
  the terms of order smaller than $3$ have different formulas
  (because the dissipation is $1-\gamma \eps^3$). 

  Note that $u_n$ satisfies equation \eqref{eq:cohom-eq-order-n}, then  using Lemma \ref{lem:inverse}, Proposition \ref{trigonometric}, and  the notation $T_\omega(\th) := \th + \omega  $, we have

\begin{align*}
     \|u_n\|_{\rho,r} &\leq \left\|\LL_\omega^{-1}\left( \sum_{|\ell|\leq J } \alpha_\ell E^\ell_{n-1} +\mu_n - \gamma u_{n-3} + \gamma u_{n-3}\circ T_{ - \omega} \right)\right\|_{\rho,r} \\
     &\leq 4\nu^{-2} (nJ)^{2\tau} \left \|\sum_{|\ell |\leq J } \alpha_\ell E^\ell_{n-1} + \mu_n - \gamma u_{n-3} + \gamma u_{n-3}\circ T_{-\omega}  \right \|_{\rho,r} \\
     & \leq 4\nu^{-2}J^{2\tau}n^{2\tau} \left( \left\| \sum_{|\ell |\leq J } \alpha_\ell E^\ell_{n-1} \right\|_{\rho, r} + 2 \gamma \left\| u_{n-3} \right\|_{\rho, r} \right) \\
     & \leq 4\nu^{-2} J^{2\tau}n^{2\tau} \left(  \Upsilon  A((n-1)!)^\sigma + 2\gamma B( (n-3)!)^\sigma  \right)  \\
     & \leq  4\nu^{-2} J^{2\tau}\left( \Upsilon  A  + 2\gamma B \right) (n!)^\sigma 
\end{align*}

Where we have used Remark~\ref{projection} to estimate
\[ \left\| \sum_{|\ell |\leq J } \alpha_\ell E^\ell_{n-1}(\th) +\mu_n  \right\|_{\rho,r}
  \le \left\| \sum_{|\ell |\leq J } \alpha_\ell E^\ell_{n-1}(\th) \right\|_{\rho,r}
  \]

  Note also that we have used  very elementary estimates on
  the terms involving $u_{n-3}$.

Also, by \ref{recursion}
\begin{align*}
    \|E_n^\ell\|_{\rho,r} &\leq \frac{|\ell |}{n}\sum_{m=0}^{n-1}(m+1)\|u_{m+1}\|_{\rho,r} \|E^k_{n-1-m}\|_{\rho,r} \\
    & \leq \frac{|\ell |}{n}\sum_{m=0}^{n-1}(m+1) B ((m+1)!)^\sigma A ((n-1-m)!)^\sigma \\
    &\leq \frac{J}{n} AB  (n!)^\sigma \sum_{m=0}^{n-1} (m+1)\left( \frac{(m+1)!(n-1-m)!}{n!}\right)^\sigma \\
    & =  J(n!)^\sigma  AB \sum_{m=0}^{n-1} \frac{m+1}{n} {n \choose m+1}^{-\sigma} \\
    & \leq  J(n!)^\sigma  AB \sum_{m=1}^{n} {n \choose m}^{-\sigma} \\
    & \leq J\Gamma_\sigma AB  (n!)^\sigma  \\
\end{align*}
where $\Gamma_\sigma$ is the constant given by Proposition \ref{prop:technical}.
\end{proof}

\subsubsection{Properties of the quantities under scaling and end of
  the argument for maximal dimensional tori} \label{subsec:scaling}

Now we turn to arguing that by rescaling the perturbative parameter
$\eps$, we can adjust the conditions in \eqref{condition1}
and get the first terms of the induction started.

The main observation is that if we consider expansions in
a new variable $\teps$ such that $\eps = \teps \eta$ with
$\eta > 0$ a fixed number, which we will take to be small, we obtain a problem of the same form  but the parameters of the problem change, that is, \def\tUpsilon{ {\tilde \Upsilon}}
\def\tgamma{ {\tilde \gamma}}
$\tUpsilon = \Upsilon \eta$,  $\tgamma = \eta^3 \gamma$.

Furthermore, we observe that in the perturbation expansions
we have that $u_0 = 0 $ and, substituting
$\eta \teps$ in place of $\eps$ in the series, we obtain
that the scaled series satisfy: 
for $j \ge 1$, 
$\tilde u_j = \eta^j u_j $, Similarly, the trigonometric
expansions satisfy $E^\ell_0(\th)  = e^{i \ell\cdot \th} $, and, for
$j \ge 1$,  $\tilde E^\ell_j (\th)  = \eta^j E^\ell_j$ .

In summary, by scaling, we can assume that the constants $\Upsilon, \gamma$ are
small enough,  that the first terms $u_n$ are also small, and
that the $E^\ell_j$ are small for $1 \leq j \le n_0$ ($E^\ell_0$ remains of
a fixed size for all the scalings, that is,  $\| E^\ell_0 \|_{\rho, r} = \exp( |\ell| \rho)(1 + |\ell|^2)^{r/2} $ ).

Taking into account the observations above, now we show  that  conditions \eqref{condition1} are satisfied by choosing suitably the constants $\Upsilon$, $\gamma$ (this condition amount two smallness conditions in the scaling parameter $\eta$). We recall we are in a situation where $\rho$ and $r$, the parameters of the norm, are fixed. We start by choosing $\sigma > 2\tau $, then we choose $A$ such that $A > \exp(J\rho)(1+|J|^2)^{r/2}$ and $B$ small enough so that the second inequality in \eqref{condition1} is satisfied. We observe that if $\Upsilon$ and $\gamma$ are small enough, the third inequality in \eqref{condition1} is satisfied. As indicated above, this condition amounts to smallness conditions in the scaling parameter $\eta$. 

To end the argument we observe that by the choice of $A$ above, the scaling properties, and  by choosing the scaling parameter small enough we can ensure the inequalities in \eqref{eq:induction} for $0\leq j\leq 4$. Hence, under a finite number of conditions in $\eta$, we can have
the conditions in \eqref{condition1} and \eqref{eq:induction} fulfilled, i.e.,  the induction
starts and can be continued. Finally, in the original scaling we obtain that
$\| u_j\|_{\rho, r} \le  A \eta^{-j}( j!)^\sigma$, which is
the claimed result.

\begin{remark}
    The Gevrey estimates for $\mu_n$ come directly from \eqref{eq:mu-max-tori}, that is, $$\mu_n = -\int_{\torus^D}  \sum_{|\ell|\leq J} \alpha_\ell E_{n-1}^\ell(\th)\, d\th.$$ Once one has the estimates \eqref{eq:induction} the estimates for $\mu_n$ follow immediately. We have not included these estimates in the Lemmas to keep the statements as simple as possible.  
\end{remark}

\subsubsection{Lower dimensional tori. Dissipative case, $\gamma\neq 0$.}

\begin{lemma}\label{lem:inductive-froeshle-dissip}
   We are in the set up of Section~\ref{sec:sec:recur-low-dissip}. We consider the formal power series $g=\sum \eps^n g_n, \mu = \sum \eps^n\mu_n$ solving \eqref{invariance-lower}.
   Assume:
   
   The frequency  $\om\in \real$ is a  Diophantine multiple of $2 \pi$, of type $(\nu, \tau)$.

   Assume  also that for $0 \le |\ell| \le  J$,  we have 
  \begin{equation} \label{eq:induc-whisk-diss} 
 \begin{split}
     & \| F^{\ell, k}_j \| \le A  (j!)^\sigma \quad  0 \leq  j \le n-1 \\
     & \| g_j \| \le B (j!)^\sigma \quad 0 \le j \leq n-1
   \end{split}
   \end{equation}
   Then, provided that the constants in \eqref{eq:induc-whisk-diss} satisfy:
   \begin{equation} \label{condition-whisk-diss} 
     \begin{split}
       & 2 \tau <  \sigma \\
       & J\Gamma_\sigma AB  \leq A \\
       &  4\nu^{-2} J^{2\tau}\left( \Upsilon  A  + 2\gamma B \right)
       \leq B \\
     \end{split}
   \end{equation}
   we recover the same assumptions
   \eqref{eq:induc-whisk-diss} with $n$ in place of $n-1$.
\end{lemma}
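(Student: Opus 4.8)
The plan is to mirror, essentially verbatim, the argument used for maximal dimensional tori in Lemma~\ref{lem:inductivefinal}, since the cohomology equations \eqref{eq:g-whisk-12}, \eqref{eq:g-whisk-3}, \eqref{eq:g-whisk-ord-n} have exactly the same shape as \eqref{mainrecursion}, \eqref{eq:rec-3}, \eqref{eq:cohom-eq-order-n}, and the recursion \eqref{eq:recursion-Froeshle} for the $F_n^{\ell,k}$ is formally identical to \eqref{recursion} for the $E_n^\ell$. The only genuine differences are that the unknowns $g_n$ take values in $\real^2$ rather than $\real^D$, that the embedding vector $k$ now enters (but only through the definition of $F_0^{\ell,k}$, which plays no role in the estimates since it only contributes a fixed-size constant), and that the drift $\mu_n$ is now determined by \eqref{eq:mu-low-dim} instead of \eqref{eq:mu-max-tori} --- but this projection still does not increase the norm, so Remark~\ref{projection} applies just as before.

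The key steps, in order: first I would invoke Proposition~\ref{prop: Froeshle-trigonometric} to know that $g_n$ and the right-hand side $R_n$ are trigonometric polynomials, $g_n$ of degree at most $nJ$, so that Lemma~\ref{lem:inverse} applies with the bound $4\nu^{-2}(nJ)^{2\tau}$ on the norm of $\LL_\omega^{-1}$. Then I would estimate $\|g_n\|_{\rho,r}$ directly from \eqref{eq:g-whisk-ord-n}: apply $\LL_\omega^{-1}$, use the Diophantine bound, split the right-hand side into the $\sum_{|\ell|\le J}\alpha_\ell F_{n-1}^{\ell,k}$ term, the drift $\mu_n$ (which I drop using Remark~\ref{projection} since $\mu_n$ in \eqref{eq:mu-low-dim} is precisely a projection), and the two dissipative terms $\gamma g_{n-3}$ and $\gamma g_{n-3}\circ T_{-\omega}$. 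Bounding $\|\sum\alpha_\ell F_{n-1}^{\ell,k}\|_{\rho,r}\le \Upsilon A((n-1)!)^\sigma$ and $\|g_{n-3}\|_{\rho,r}\le B((n-3)!)^\sigma$, and using $((n-1)!)^\sigma, ((n-3)!)^\sigma \le (n!)^\sigma$ together with $n^{2\tau}((n-1)!)^\sigma \le (n!)^\sigma$ (valid since $2\tau < \sigma$), one arrives at $\|g_n\|_{\rho,r}\le 4\nu^{-2}J^{2\tau}(\Upsilon A + 2\gamma B)(n!)^\sigma \le B(n!)^\sigma$ by the third inequality in \eqref{condition-whisk-diss}. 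Next I would estimate $\|F_n^{\ell,k}\|_{\rho,r}$ from \eqref{eq:recursion-Froeshle} in exactly the same way as the $E_n^\ell$ computation: pull out $|\ell|/n \le J/n$, bound each summand by $B((m+1)!)^\sigma A((n-1-m)!)^\sigma$, recognize the sum $\sum_m \frac{m+1}{n}\binom{n}{m+1}^{-\sigma} \le \sum_{m=1}^n \binom{n}{m}^{-\sigma} \le \Gamma_\sigma$ via Proposition~\ref{prop:technical}, and conclude $\|F_n^{\ell,k}\|_{\rho,r}\le J\Gamma_\sigma AB (n!)^\sigma \le A(n!)^\sigma$ by the second inequality in \eqref{condition-whisk-diss}.

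I do not expect any real obstacle here: this lemma is deliberately constructed to be the lower-dimensional, dissipative analogue of Lemma~\ref{lem:inductivefinal}, and the authors have already arranged the recursions, the norm, and the non-degeneracy bookkeeping (Lemma~\ref{lem:constants-lower-dim} and the choice \eqref{eq:mu-low-dim}) so that the estimates go through unchanged. The one point that deserves a sentence of care --- and is the closest thing to a subtlety --- is that the constant $g_0 = \beta_0 k^\perp$ appears inside $F_0^{\ell,k} = e^{i\ell\cdot(\theta k + g_0)}$, so that $\|F_0^{\ell,k}\|_{\rho,r}$ is a fixed number ($\exp(|\ell|\rho)(1+|\ell|^2)^{r/2}$, since $g_0$ is real) not affected by the later scaling of $\eps$; as in Section~\ref{subsec:scaling}, one simply chooses $A$ larger than $\exp(J\rho)(1+J^2)^{r/2}$ so that the base case $j=0$ of \eqref{eq:induc-whisk-diss} holds. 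The proof therefore consists of the two displayed chains of inequalities above, preceded by the citation of Proposition~\ref{prop: Froeshle-trigonometric} and a remark that $\mu_n$ is dropped via Remark~\ref{projection}.

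\begin{proof}
As in the proof of Lemma~\ref{lem:inductivefinal}, we use rough estimates and, to avoid the special low-order formulas coming from the dissipation $1-\gamma\eps^3$, we only estimate the terms of order $n \ge n_0 \ge 4$; the finitely many lower-order terms are handled by the scaling argument of Section~\ref{subsec:scaling}. By Proposition~\ref{prop: Froeshle-trigonometric}, $g_n$ and $R_n$ are trigonometric polynomials, $g_n$ of degree at most $nJ$, so Lemma~\ref{lem:inverse} applies. Writing $T_\omega(\th) = \th + \omega$, and using that $g_n$ solves \eqref{eq:g-whisk-ord-n}, we obtain
\begin{align*}
  \|g_n\|_{\rho,r}
  &\le \left\| \LL_\omega^{-1}\left( \sum_{|\ell|\le J}\alpha_\ell F^{\ell,k}_{n-1} + \mu_n - \gamma g_{n-3} + \gamma g_{n-3}\circ T_{-\omega} \right) \right\|_{\rho,r} \\
  &\le 4\nu^{-2}(nJ)^{2\tau}\left\| \sum_{|\ell|\le J}\alpha_\ell F^{\ell,k}_{n-1} + \mu_n - \gamma g_{n-3} + \gamma g_{n-3}\circ T_{-\omega} \right\|_{\rho,r} \\
  &\le 4\nu^{-2}J^{2\tau}n^{2\tau}\left( \left\| \sum_{|\ell|\le J}\alpha_\ell F^{\ell,k}_{n-1} \right\|_{\rho,r} + 2\gamma\|g_{n-3}\|_{\rho,r} \right) \\
  &\le 4\nu^{-2}J^{2\tau}n^{2\tau}\left( \Upsilon A((n-1)!)^\sigma + 2\gamma B((n-3)!)^\sigma \right) \\
  &\le 4\nu^{-2}J^{2\tau}\left( \Upsilon A + 2\gamma B \right)(n!)^\sigma,
\end{align*}
where we used Remark~\ref{projection} to drop $\mu_n$ (which, by \eqref{eq:mu-low-dim}, is an orthogonal projection) and the inequality $n^{2\tau}((n-1)!)^\sigma \le (n!)^\sigma$, valid since $2\tau < \sigma$. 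By the third inequality in \eqref{condition-whisk-diss} this is bounded by $B(n!)^\sigma$.

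Next, by the recursion \eqref{eq:recursion-Froeshle},
\begin{align*}
  \|F_n^{\ell,k}\|_{\rho,r}
  &\le \frac{|\ell|}{n}\sum_{m=0}^{n-1}(m+1)\|g_{m+1}\|_{\rho,r}\,\|F^{\ell,k}_{n-1-m}\|_{\rho,r} \\
  &\le \frac{|\ell|}{n}\sum_{m=0}^{n-1}(m+1)\,B((m+1)!)^\sigma\,A((n-1-m)!)^\sigma \\
  &\le \frac{J}{n}AB(n!)^\sigma\sum_{m=0}^{n-1}(m+1)\left(\frac{(m+1)!(n-1-m)!}{n!}\right)^\sigma \\
  &= J(n!)^\sigma AB\sum_{m=0}^{n-1}\frac{m+1}{n}\binom{n}{m+1}^{-\sigma} \\
  &\le J(n!)^\sigma AB\sum_{m=1}^{n}\binom{n}{m}^{-\sigma} \\
  &\le J\Gamma_\sigma AB(n!)^\sigma,
\end{align*}
where $\Gamma_\sigma$ is the constant of Proposition~\ref{prop:technical}. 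By the second inequality in \eqref{condition-whisk-diss} this is bounded by $A(n!)^\sigma$. Thus the bounds \eqref{eq:induc-whisk-diss} hold with $n$ in place of $n-1$, as claimed.
\end{proof}
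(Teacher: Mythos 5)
Your proposal is correct and follows essentially the same route as the paper: the paper itself proves this lemma by observing that the cohomology equations and the recursion for the $F_n^{\ell,k}$ have the same form as in the maximal-dimensional case and then repeating the two chains of estimates from Lemma~\ref{lem:inductivefinal}, which is exactly what you write out (your observation that $\mu_n$ from \eqref{eq:mu-low-dim} removes only the $k$-component of the average, and is still a norm-nonincreasing orthogonal projection, is a correct and slightly more careful rendering of the paper's appeal to Remark~\ref{projection}).
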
 
The proof of Lemma \ref{lem:inductive-froeshle-dissip} follows using the same arguments as in the proof of Lemma \ref{lem:inductivefinal}. First, the inequality for $\|g_i\|$ in \eqref{eq:induc-whisk-diss} is obtained in the exact same way as in Lemma \ref{lem:inductivefinal} because the equations  
\begin{equation*}
    \LL_\omega g_n(\th) = R_n(\th) + \mu_n  - \gamma g_{n-3}(\th) + \gamma g_{n-3}(\th-\omega) \quad\mbox{for } n \geq 4
\end{equation*}
in Section \eqref{sec:sec:recur-low-dissip}, have the same form as the ones in Section \ref{sec:recursive}. Recall that \begin{equation}\label{eq:Rn-low-dim-bis} R_n(\th) =\sum_{|\ell|\leq J}{\alpha}_\ell F^{\ell, k}_{n-1}(\th) .\end{equation}

Next, the inequality for $\|F_j^{\ell, k}\|$ in \eqref{eq:induc-whisk-diss} is also obtained in the exact same way as in Lemma \ref{lem:inductivefinal} due to the fact that  the recursion \begin{equation}
    n F_n^{\ell, k}(\th) = \sum_{m=0}^{n-1}(m+1) i\ell \cdot g_{m+1}(\th) F_{n-1-m}^{\ell, k}(\th).
\end{equation}
 has the same form as \eqref{recursion}. 

Finally, one can use the same scaling argument in Section \ref{subsec:scaling} to ensure the conditions \eqref{condition-whisk-diss}. 

\begin{remark}
    Again, the Gevrey estimates for the coefficients $\mu_n$ come from \eqref{eq:mu-low-dim}, that is,  \begin{equation}
\mu_n = - \left(\frac{1}{ (k\cdot k)}\int_0^{2\pi} k\cdot R_n(\theta) d\th \right) k 
\end{equation} and the fact that $R_n(\theta)$ is given by \eqref{eq:Rn-low-dim-bis}
\end{remark}

\subsubsection{Lower dimensional tori. Conservative case, $\gamma= 0$.}

\begin{lemma}\label{lem:inductive-froeshle}
   We are in the set up of Section~\ref{sec:sec:recur-low-conserv}.  We consider the formal power series $g=\sum \eps^n g_n$ solving \eqref{invariance-lower} with $\gamma =0$. Assume:
   
   The frequency  $\om\in \real$ is a Diophantine multiple of $2 \pi$, of type $(\nu,\tau)$.  For $0 \le |\ell| \le  J$,  we have 
  \begin{equation} \label{eq:induction-whiskered} 
 \begin{split}
     & \| F^{\ell, k}_j \| \le A  (j!)^\sigma \quad  0 \leq  j \le n-1 \\
     & \| g_j \| \le B (j!)^\sigma \quad 0 \le j \leq n-1
   \end{split}
   \end{equation}
   Then, provided that the constants in \eqref{eq:induction-whiskered} satisfy:
   \begin{equation} \label{condition1-whiskered} 
     \begin{split}
       & 2 \tau <  \sigma \\
       & J\Gamma_\sigma AB  \leq A \\
       & 4 \nu^{-2}  J^{2\tau}\Upsilon A 
       \leq B  \\
     \end{split}
   \end{equation}
   we recover the same assumptions
   \eqref{eq:induction-whiskered} with $n$ in place of $n-1$.
\end{lemma}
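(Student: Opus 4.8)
The plan is to mimic the proof of Lemma~\ref{lem:inductivefinal} essentially verbatim, since the recursion structure for the conservative low-dimensional case is the same, only with the dissipative terms $\gamma g_{n-3}$ absent. First I would recall that under the inductive hypothesis \eqref{eq:induction-whiskered}, the coefficients $F^{\ell,k}_j$ for $j \le n-1$ and $g_j$ for $j \le n-1$ satisfy the Gevrey bounds, and that $g_n$ solves the cohomology equation $\LL_\omega g_n(\th) = R_n(\th)$, where by \eqref{eq:R-n-whisk} we have $R_n(\th) = \sum_{|\ell| \le J} \alpha_\ell F^{\ell,k}_{n-1}(\th)$ (after the constant $\beta_{n-1}$ has been fixed via Lemma~\ref{lem:constants-lower-dim}, which only changes $R_n$ by lower-order data and does not affect the norm estimate once we absorb constants). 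The key point is that, by Lemma~\ref{lem:constants-lower-dim}, $R_n$ has zero average, so Lemma~\ref{lem:inverse} applies and gives a trigonometric polynomial $g_n$ of degree at most $nJ$ (Proposition~\ref{prop: Froeshle-trigonometric}).

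Next I would carry out the two norm estimates. For $g_n$: using Lemma~\ref{lem:inverse} with the degree bound $nJ$ from Proposition~\ref{prop: Froeshle-trigonometric}, together with Remark~\ref{projection} to handle the removal of the average,
\[
  \|g_n\|_{\rho,r} \le 4\nu^{-2}(nJ)^{2\tau}\Big\| \sum_{|\ell|\le J}\alpha_\ell F^{\ell,k}_{n-1}\Big\|_{\rho,r}
  \le 4\nu^{-2}J^{2\tau} n^{2\tau}\,\Upsilon A\,((n-1)!)^\sigma
  \le 4\nu^{-2}J^{2\tau}\Upsilon A\,(n!)^\sigma,
\]
where the last step uses $n^{2\tau}((n-1)!)^\sigma \le (n!)^\sigma$, which holds because $2\tau < \sigma$ (the first inequality in \eqref{condition1-whiskered}). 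The third inequality in \eqref{condition1-whiskered} then gives $\|g_n\|_{\rho,r} \le B(n!)^\sigma$. For $F^{\ell,k}_n$: using the recursion \eqref{eq:recursion-Froeshle}, the Banach algebra property of $\|\cdot\|_{\rho,r}$, and the inductive bounds,
\[
  \|F^{\ell,k}_n\|_{\rho,r} \le \frac{|\ell|}{n}\sum_{m=0}^{n-1}(m+1)\|g_{m+1}\|_{\rho,r}\|F^{\ell,k}_{n-1-m}\|_{\rho,r}
  \le J(n!)^\sigma AB \sum_{m=1}^{n}\binom{n}{m}^{-\sigma} \le J\Gamma_\sigma AB\,(n!)^\sigma,
\]
exactly as in Lemma~\ref{lem:inductivefinal}, invoking Proposition~\ref{prop:technical}. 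The second inequality in \eqref{condition1-whiskered}, $J\Gamma_\sigma AB \le A$, then yields $\|F^{\ell,k}_n\|_{\rho,r} \le A(n!)^\sigma$. This recovers \eqref{eq:induction-whiskered} with $n$ in place of $n-1$.

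Finally, to close the whole argument (not strictly part of the lemma statement, but worth noting), one invokes the scaling discussion of Section~\ref{subsec:scaling}: replacing $\eps$ by $\teps\eta$ rescales $\Upsilon \mapsto \eta\Upsilon$ and $g_j \mapsto \eta^j g_j$, $F^{\ell,k}_j \mapsto \eta^j F^{\ell,k}_j$ for $j \ge 1$, so that after choosing $\sigma > 2\tau$, then $A$ large enough that $A > \exp(J\rho)(1+J^2)^{r/2} \ge \|F^{\ell,k}_0\|_{\rho,r}$, then $B$ small so $J\Gamma_\sigma AB \le A$, and finally $\eta$ small so that $4\nu^{-2}J^{2\tau}\Upsilon A \le B$ and the finitely many initial steps $0 \le j \le n_0$ of the induction hold, all hypotheses of Lemma~\ref{lem:inductive-froeshle} are met. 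The main (minor) obstacle I anticipate is purely bookkeeping: making sure that the adjustment of the free constant $\beta_{n-1}$ via Lemma~\ref{lem:constants-lower-dim} is compatible with the Gevrey bound on $g_{n-1}$ — i.e. that $|\beta_{n-1}|$ itself is bounded by a constant times $((n-1)!)^\sigma$ — which follows from the non-degeneracy condition \eqref{lowerassumption} and the already-established bound on $S_n$ (a polynomial expression in $g_0,\dots,g_{n-1}$, hence Gevrey by Proposition~\ref{analyticomposition}); everything else is a transcription of the maximal-dimensional proof with the dissipation terms deleted.
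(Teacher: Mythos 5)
Your proof follows the paper's argument for this lemma essentially verbatim: the same two estimates (for $g_n$ via Lemma~\ref{lem:inverse}, Proposition~\ref{prop: Froeshle-trigonometric} and the bound $n^{2\tau}((n-1)!)^\sigma\le (n!)^\sigma$ from $2\tau<\sigma$, and for $F^{\ell,k}_n$ via the recursion \eqref{eq:recursion-Froeshle} and Proposition~\ref{prop:technical}), followed by the same scaling observation from Section~\ref{subsec:scaling}. Your added remark that the adjustment of the free constant $\beta_{n-1}$ must be compatible with the Gevrey bound on $g_{n-1}$ flags a point the paper leaves implicit, but it does not alter the route.
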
 

\begin{proof}
Using \eqref{eq:eq-ord-n}, Lemma \ref{lem:inverse}, Proposition \ref{prop: Froeshle-trigonometric}, and \eqref{eq:R-n-whisk} we have

\begin{align*}
     \|g_n\|_{\rho,r} &\leq \left\|\LL_\omega^{-1}\left( \sum_{|\ell|\leq J } \alpha_\ell F^{\ell,k}_{n-1}(\th) \right) \right\|_{\rho,r} \\
     &\leq 4\nu^{-2} (nJ)^{2\tau} \|\sum_{|\ell|\leq J } \alpha_\ell F^{\ell, k}_{n-1}\|_{\rho,r}  \\
     & \leq 4\nu^{-2} J^{2\tau}n^{2\tau}A\Upsilon  ((n-1)!)^\sigma  \\
                      & \leq 4 \nu^{-2}  J^{2\tau}\Upsilon A 
                        (n!)^\sigma 
\end{align*}

Also, using the recursion \eqref{eq:recursion-Froeshle}
\begin{align*}
    \|F_n^{\ell, k}\|_{\rho,r} &\leq \frac{|\ell|}{n}\sum_{m=0}^{n-1}(m+1)\|g_{m+1}\|_{\rho,r} \|F_{n-1-m}^{\ell, k}\|_{\rho,r} \\
    & \leq \frac{|\ell|}{n}\sum_{m=0}^{n-1}(m+1) B ((m+1)!)^\sigma A ((n-1-m)!)^\sigma \\
    &\leq \frac{J}{n} AB  (n!)^\sigma \sum_{m=0}^{n-1} (m+1)\left( \frac{(m+1)!(n-1-m)!}{n!}\right)^\sigma \\
    & =  J(n!)^\sigma  AB \sum_{m=0}^{n-1} \frac{m+1}{n} {n \choose m+1}^{-\sigma} \\
    & \leq  J(n!)^\sigma  AB \sum_{m=1}^{n} {n \choose m}^{-\sigma} \\
    & \leq J\Gamma_\sigma AB  (n!)^\sigma  \\
\end{align*}
where $\Gamma_\sigma$ is the constant given in Proposition \eqref{prop:technical}. 

Finally, we note that the same argument in Section \ref{subsec:scaling} can be applied for lower dimensional tori. That is, considering a scaling $\tilde\eps = \eta \eps$ one obtains the scaled constant $\tilde \Upsilon =\eta \Upsilon $, and choosing $\eta$ small enough one can assure that \eqref{condition1-whiskered} is fulfilled.
\end{proof}

\bibliographystyle{alpha}
\bibliography{gevrey.bib}

\end{document}